
\documentclass[12pt]{elsarticle}




\usepackage{amssymb}
\usepackage{amsmath}
\usepackage{amsfonts}
\usepackage{mathrsfs}
\usepackage{amsthm}

\usepackage{hyperref}
\usepackage{doi}


\usepackage[capitalise]{cleveref}
\usepackage{xspace}
\usepackage{enumitem}
\usepackage{siunitx}
\usepackage{multirow}

\usepackage{a4wide}
\setlength{\marginparwidth}{2cm}
\usepackage{todonotes}
\usepackage{booktabs}

\usepackage{subcaption}

\newcommand{\dumux}{DuMu\textsuperscript{x}\xspace}


\newcommand{\calD}{\mathcal{D}}
\newcommand{\calE}{\mathcal{E}}

\newcommand{\calP}{\mathcal{P}}

\newcommand{\calT}{\mathcal{T}}
\newcommand{\calV}{\mathcal{V}}
\newcommand{\calS}{\mathcal{S}}
\newcommand{\calM}{\mathcal{M}}
\newcommand{\calDofs}{\mathcal{L}}

\newcommand{\n}{\mathbf{n}}

\newcommand{\meas}[1]{|{#1}|}

\newcommand{\vel}{\boldsymbol{v}}
\newcommand{\veltest}{\boldsymbol{w}}
\newcommand{\veldisc}{\boldsymbol{v}_h}
\newcommand{\veldisctest}{\boldsymbol{w}_h}
\newcommand{\p}{p}
\newcommand{\ptest}{q}
\newcommand{\pdisc}{p_h}

\newcommand{\PSpace}{\mathbb{P}}

\newcommand{\QSpace}{\mathbb{Q}}

\newcommand{\discspace}{\mathit{V}_h}
\newcommand{\dimspace}[2]{\left[#1\right]^{#2}}
\newcommand{\bubblespace}{\PSpace_{1+b}}

\newcommand{\boxspace}{\PSpace_1}

\newcommand{\bubblebox}[1]{\ensuremath{\dimspace{\bubblespace}{#1} \times \boxspace}}
\newcommand{\bubbleboxname}[2]{\ensuremath{\dimspace{\bubblespace^{\text{#2}}}{#1} \times \boxspace}}

\newcommand{\bubbleboxovtable}[1]{\ensuremath{\begin{array}{c} \dimspace{\bubblespace^{\text{ov}}}{#1} \\ {}\times{} \\ \boxspace\end{array}}}
\newcommand{\bubbleboxnovtable}[1]{\ensuremath{\begin{array}{c} \dimspace{\bubblespace^{\text{nov}}}{#1} \\ {}\times{} \\ \boxspace\end{array}}}
\newcommand{\bubbleboxhytable}[1]{\ensuremath{\begin{array}{c} \dimspace{\bubblespace^{\text{hy}}}{#1} \\ {}\times{} \\ \boxspace\end{array}}}
\newcommand{\bubbleboxfemtable}[1]{\ensuremath{\begin{array}{c} \dimspace{\bubblespace^{\text{fem}}}{#1} \\ {}\times{} \\ \boxspace\end{array}}}


\newcommand{\elem}{E}
\newcommand{\gridFace}{\epsilon}




\theoremstyle{plain}

\newtheorem{proposition}{Proposition}

\newtheorem{definition}{Definition}

\theoremstyle{remark}
\newtheorem{remark}{Remark}
\newtheorem{example}{Example}

\newcommand{\eqbydef}{:=}

\journal{}

\begin{document}

\begin{frontmatter}



\title{Stable and locally mass- and momentum-conservative control-volume finite-element schemes for the Stokes problem}


\author[inst1]{Martin Schneider}

\affiliation[inst1]{organization={Department of Hydromechanics and Modelling of Hydrosystems},
            addressline={{Pfaffenwaldring 61}}, 
            city={Stuttgart},
            postcode={70569}, 
            country={Germany}}

\author[inst2]{Timo Koch}

\affiliation[inst2]{organization={Department of Mathematics, University of Oslo},
            addressline={Postboks 1053, Blindern}, 
            city={Oslo},
            postcode={0316}, 
            country={{Norway}}}

\begin{abstract}
We introduce new control-volume finite-element discretization schemes suitable for solving the Stokes problem. Within a common framework, we present different approaches for constructing such schemes. The first and most established strategy employs a non-overlapping partitioning into control volumes. The second represents a new idea by splitting into two sets of control volumes, the first set yielding a partition of the domain and the second containing the remaining overlapping control volumes required for stability. The third represents a hybrid approach where finite volumes are combined with finite elements based on a hierarchical splitting of the ansatz space. All approaches are based on typical finite element function spaces but yield locally mass and momentum conservative discretization schemes that can be interpreted as finite volume schemes.
We apply all strategies to the inf-sub stable MINI finite-element pair. Various test cases, including convergence tests and the numerical observation of the boundedness of the number of preconditioned Krylov solver iterations, as well as more complex scenarios of flow around obstacles or through a three-dimensional vessel bifurcation, demonstrate the stability and robustness of the schemes.
\end{abstract}

\begin{keyword}
Stokes \sep finite volumes \sep control-volume finite-elements \sep numerical methods \sep incompressible Stokes \sep symmetric velocity gradient
\end{keyword}

\end{frontmatter}


\section{Introduction}
\label{sec:intro}

Let $\Omega \subset \mathbb{R}^d$, $d = 2,3$ be a bounded Lipschitz domain with boundary $\partial \Omega$. The incompressible stationary Stokes equations are given by
\begin{subequations}
\begin{align}
   -\operatorname{div}\left( 2 \mu \boldsymbol{D}(\vel) - \p \boldsymbol{I} \right) &= \boldsymbol{f} && \text{in} \;\Omega, \label{eq:MomentumBalance} \\
   \operatorname{div} \vel &= 0 && \text{in} \;\Omega, \label{eq:MassBalance}
\end{align}
\label{eq:stokes}
\end{subequations}
with velocity $\vel$, pressure $p$, dynamic fluid viscosity $\mu$, a momentum source term $\boldsymbol{f}$ (e.g.\ modeling gravity or other body forces), and the symmetric velocity gradient $\boldsymbol{D}(\vel) = \frac{1}{2}\left(\nabla \vel + \nabla^T \vel\right)$, subject to the boundary conditions
\begin{subequations}
\begin{align}
   \vel &= \vel_D && \text{on} \;\partial\Omega_D, \label{eq:DirichletBC} \\
   -\left( 2 \mu \boldsymbol{D}(\vel) - \p \boldsymbol{I} \right)\boldsymbol{n} &= \boldsymbol{t}_N && \text{on} \;\partial\Omega_N, \label{eq:NeumannBC}
\end{align}
\label{eq:stokesBC}
\end{subequations}
where $\partial\Omega_D \cup \partial\Omega_N = \partial \Omega$, $\boldsymbol{n}$ is the outward-oriented unit normal vector on $\partial \Omega$, and $\vel_D$, $\boldsymbol{t}_N$ the given boundary data.

When constructing numerical solvers for \cref{eq:stokes,eq:stokesBC}, it is well known that discretization schemes have to be carefully constructed to obtain physically consistent and accurate solutions. For example, na\"{i}ve finite-volume (FV) or finite-difference (FD) methods with both velocity and pressure degrees of freedom placed at the same locations suffer from checkerboard oscillations in the pressure field even though the velocity field may be well approximated \citep{Langtangen2002}. This is why a staggered arrangement of degrees of freedom has been proposed in the literature, yielding so-called staggered-grid finite-volume schemes, see e.g.~\cite{harlow1965a,ferziger2020computational,schneider2020}. Alternative approaches introduce stabilization terms to prevent checkerboard oscillations \cite{Brezzi1984,Hughes1986}.

For the construction of finite-element schemes, it is well known that velocity and pressure spaces cannot be chosen independently but instead so-called LBB-stable pressure-velocity pairs, i.e.\ pairs satisfying the Ladyzhenskaya-Babu{\v{s}}ka-Brezzi condition~\cite{BrezziFortin1991}, have to be chosen. Examples of lowest-order schemes are, for example, the MINI element~\cite{Arnold1984}, which uses an $H^1$-conforming velocity space enriched with so-called bubble functions; 
the Crouzeix–Raviart (CR) element \cite{Crouzeix1973}, which uses a non-conforming $\PSpace_1$ basis for the velocity space on simplicial meshes; or the Rannacher-Turek (RT) element \cite{Rannacher1992}, using a non-conforming basis on hexahedral grids. These schemes generally are conservative if and only if piece-wise constant functions $\PSpace_0$ can be described by the test function space. This holds for the CR and RT pressure spaces, yielding local mass conservation, but not for the MINI element. Local momentum conservation\footnote{without additional post-processing of the results, see e.g.~\citep{Hughes2000}} does generally not hold for any of these finite-element schemes.

Local conservation is obtained by construction with so-called
control-volume finite-element schemes (CVFE) which try to combine the flexibility of finite elements with the transparent local conservation properties and simplicity of finite-volume schemes. CVFE schemes can be interpreted both as finite-volume schemes, using particular discrete local gradient operators, and as Petrov-Galerkin finite-element schemes~\cite{Bank1987Box}, using particular combinations of trial and test function spaces. After the development of the classical CVFE scheme, a vertex-centered finite-volume scheme based on element-wise linear polynomial and globally conforming function approximations, also known as the Box method~\cite{Winslow1966,Bank1987Box}, CVFE schemes have since been extended for solving the Stokes equations \labelcref{eq:stokes}. Such extensions include an equal-order ansatz with stabilization terms~\cite{Prakash1985,Eymard2006,Li2009,Quarteroni2011}; face-centered finite volumes with CR-$\PSpace_0$ or RT-$\PSpace_0$ finite elements with~\cite{Zhang2015} and without stabilization~\cite{Chou1997,Chou1998,ye2001relationship,Ye2006}; BDM-$\PSpace_0$ finite elements~\cite{Brezzi1985,Wang2010}; or stabilized Q1-Q0~\cite{Zhang2019} with vertex-centered finite volumes. Recently, CVFE schemes based on element-wise second-order Lagrange polynomial spaces ($\PSpace_2$) have been presented in~\cite{Chen2017,Zhang2022}.

To the best of our knowledge, the schemes presented in these works suffer from one or more of the following issues: (1) The scheme is not locally mass- and momentum-conservative or just one conservation property is satisfied. For example, schemes based on equal-order stabilized methods~\citep{Prakash1985,Eymard2006,Li2009,Quarteroni2011} usually compromise local mass conservation due to the added stability term. (The scheme may still be locally conservative, however with a modified flux function.) (2) Local conservation may be satisfied weakly but not strongly in the discrete form and is therefore subject to the typical numerical approximation errors. This is particularly important when using the approximated velocity field in a tracer transport simulation, where non-physical perturbations can be introduced by non-conservative velocity fields \citep{Chen2002,Jenny2003,Sun2006}.
(3) The scheme construction does not straight-forwardly generalize from 2D to 3D~\cite{Chen2017,Zhang2022,yang2023} (constructing non-overlapping control-volume partitions is difficult, cf.~\cref{subsec:cvs}). 
(4) The scheme does not satisfy the Ladyzhenskaya-Babu{\v{s}}ka-Brezzi (LBB) conditions~\cite{BrezziFortin1991} when using the symmetric part of the velocity gradient. For example, schemes based on non-conforming finite-element spaces, such as \citep{Chou1997,ye2001relationship,Ye2006}, are only stable (without additional stabilization terms) if the full velocity gradient is used to model the viscous term in the Stokes equations.

In this work, two methods are presented to overcome these issues: an overlapping control-volume scheme and a hybrid CVFE-FEM scheme.

In literature, control-volume overlap usually refers to control volumes for different primary variables overlapping. For instance, in the staggered-grid scheme \cite{harlow1965a} the control volumes for the different velocity components are overlapping. This approach has also been used for other mixed problems, e.g.~\citep{Cai1997,Chou2001}, where the control volumes constructed for the pressure variable overlap with those constructed for the velocity. In \cite{Onate1994,Cardiff2021} all control volumes of the same variable overlap with neighboring volumes.
The strategy presented in this work differs fundamentally from these existing approaches. Here, overlapping control volumes are constructed only for a subset of degrees of freedom needed for LBB stability and in a way that all control-volume faces are always fully contained within elements.

The hybrid CVFE-FEM approach is related to the work presented in \cite{chen2010new} for elliptic equations. The idea is to combine CVFE and FEM approaches based on a hierarchical splitting of the basis function ansatz space.

In \cref{sec:preliminaries,subsec:cvfeschemes}, we provide a general framework for the construction of CVFE schemes. We present three related CVFE schemes for the Stokes equations that are both stable and strongly conserve mass and momentum locally, based on the MINI element~\cite{Arnold1984}. The local conservation property is satisfied by construction. In \cref{sec:numeric}, evidence for the stability of the methods will be provided in the form of numerical experiments of two kinds: grid convergence studies with analytical reference solutions, and the observation of bounded preconditioned linear solver iterations strongly indicating the stability of the discrete form. We conclude with two numerical examples of Stokes flow in complex domains.

\section{Preliminaries}
\label{sec:preliminaries}

For a bounded domain $\Omega \subset \mathbb{R}^d$, $d = 2,3$, let $L^2(\Omega)$ denote the space of square integrable functions, and $H^k(\Omega)$, $k\geq1$ the Sobolev space of functions on $\Omega$ with weak derivatives up to order $k$. Boldface variants correspond to vector-valued functions of dimension $d$. Homogeneous boundary conditions on a part of the boundary $\Gamma \subseteq \partial \Omega$ are indicated by a subscript $0$ as in the example $H_0^1(\Omega) := \lbrace u \in H^1(\Omega) \,\vert\, u = 0 \text{ on } \Gamma \rbrace$.
The dual space of $H_0^1(\Omega)$ with respect to the $L^2$ inner product is denoted by $H^{-1}(\Omega)$. The $L^2$ inner product is denoted by $(\cdot,\cdot)$.

\subsection{Weak form of the Stokes problem}
A weak problem formulation of \cref{eq:stokes} with mixed homogeneous boundary conditions, \cref{eq:stokesBC}, is given by:

Find $(\vel, p) \in \boldsymbol{H}^1_0 \times L^2$ such that
\begin{subequations}
\begin{align}
    (2\mu\boldsymbol{D}(\vel), \boldsymbol{D}(\veltest)) - (p, \operatorname{div} \veltest) &= (\boldsymbol{f}, \veltest) 
    && \forall \veltest \in \boldsymbol{H}^1_0, \\
   (\operatorname{div} \vel, q) &= (g, q)  && \forall \ptest \in L^2,
\end{align}
\label{eq:stokes_weak}
\end{subequations}
where we generalized to allow for non-homogeneous right-hand sides in the second equation.
Well-posedness of the continuous problem in this setting follows from classical abstract saddle-point problem theory~\cite{Brezzi1974}.

\begin{remark}
In the context of control-volume finite-element schemes, as discussed in the following, we typically choose different discrete function spaces to look for the discrete trial functions ($\boldsymbol{v}_h$, $p_h$) and test functions ($\veldisctest$, $q_h$), that is the Petrov-Galerkin approach.
\end{remark}

\subsection{Spatial discretization}
\label{sec:spatialdisc}
In this work, we present a control-volume finite-element (CVFE) discretization framework. We make use of the following grid discretization definition, which is based on \cite{schneider2018comparison,schneider2021coupling}:

\begin{definition}[Grid discretization]
\label{def:griddisc}
The tuple \mbox{$\calD \eqbydef (\calM, \calS, \calT,\calE,\calP,\calV,\calDofs)$} denotes the grid discretization, in which
  \begin{enumerate}[label=(\roman*)]
  \item $\calM$ is the set of primary grid elements such that
    \mbox{$\overline{\Omega}= \cup_{\elem \in \calM} \overline{\elem}$}.
    For each element $\elem\in\calM$, $\meas{\elem}>0$ and $\mathbf{x}_\elem$ denote its volume and barycenter.
  \item $\calS$ is the set of faces such that each face $\gridFace$ is a $(d-1)$-dimensional hyperplane
        with measure
        \mbox{$\meas{\gridFace}>0$}.
        For each primary grid element $\elem \in \calM$, $\calS_\elem$ is the subset of
        $\calS$ such that \mbox{$\partial \elem  = \cup_{\gridFace \in\calS_\elem}{\overline{\gridFace}}$}. Furthermore, $\mathbf{x}_\gridFace$ denotes
        the barycenter and $\n_{\elem,\gridFace}$ the unit vector that is normal to
        $\gridFace$ and outward to $\elem$.
  \item $\calT$ is the set of control volumes (CVs) such that
    \mbox{$\overline{\Omega}= \cup_{K \in \calT} \overline{K}$}.
    For each control volume $K\in\calT$, $\meas{K}>0$ denotes its volume. The element-local contributions of a CV, i.e.\ $K \cap E$, are called sub-control volumes.
 \item
    $\calE$ is the set of faces such that each face $\sigma$ is a $(d-1)$-dimensional hyperplane
    with measure
    \mbox{$\meas{\sigma}>0$}.
    For each $K \in \calT$, $\calE_K$ is the subset of
    $\calE$ such that \mbox{$\partial K  = \cup_{\sigma \in\calE_K}{\overline{\sigma}}$}. Furthermore, $\mathbf{x}_\sigma$ denotes
    the face evaluation points and $\n_{K,\sigma}$ the unit vector that is normal to
    $\sigma$ and outward to $K$. The element-local contributions, i.e.\ $\sigma \cap E$, are called sub-control-volume faces. 
  \item
    $\calP \eqbydef \lbrace \mathbf{x}_K\rbrace_{K \in \calT}$ is the set of points associated each  with a control volume $K$ such that
    $\mathbf{x}_K\in K$ and $K$ is star-shaped with respect to $\mathbf{x}_K$.
  \item
    $\calV$ is the set of vertices of the grid, corresponding to the corners of the primary grid elements $\elem \in \calM$.
  \item
    $\calDofs \subset \Omega$ is the set of points associated with the locations of the degrees of freedom of a nodal basis. 
  \end{enumerate}
\end{definition}

For an element-wise consideration, we define for $\mathcal{X} \in \lbrace \calT, \calE \rbrace$ the sets $\mathcal{X}_E \eqbydef \lbrace \chi \in \mathcal{X} \;|\; \meas{\chi \cap E} > 0 \rbrace$, for each $E \in \calM$, and for $\mathcal{X} \in \lbrace \calM, \calS \rbrace$ the sets $\mathcal{X}_K \eqbydef \lbrace  \chi \in \mathcal{X} \;|\; \meas{K \cap E} > 0 \rbrace$, for each $K \in \calT$. 
In the following, $\calT$ is not necessarily a partition of $\Omega$. Control volumes $K$ may overlap. However, it is assumed that there exists a subset $\calT_{\Omega} \subseteq \calT$ such that $\overline{\Omega} = \cup_{K \in \calT_{\Omega}} \overline{K}$ and $\meas{K\cap L} = 0$ for all $K \not = L$,
that is, $\calT_{\Omega}$ is a partition of $\Omega$.

For the Stokes problem, the control volumes chosen for the pressure degrees of freedom may be different than those chosen for the velocity degrees of freedom. This is indicated by a superscript, e.g.\ $\calT^\p$ and $\calT^{\vel}$. These superscripts are also used to distinguish other quantities, i.e.\ we write $(\cdot)^\p$, $(\cdot)^{\vel}$.  

\subsection{Discrete solution representation and nodal bases}

In the following, the discrete velocity and pressure solutions for the Stokes problem are expressed in terms of nodal bases.


\begin{definition}[Nodal basis]
A set of functions $ \Phi(\calDofs) = \lbrace \varphi_i \; | \; i = 1,\dotsc, |\calDofs| \rbrace$, where each function $\varphi_i$ is uniquely identified with the degrees of freedom located at $\mathbf{x}_i \in \calDofs$, 
is called a piece-wise differentiable nodal basis if for all $\varphi_i \in \Phi$:
\begin{equation}
\varphi_i \arrowvert_E  \in C^1(E)\quad \forall E \in \calM  \quad \text{and}\quad \varphi_i\arrowvert_E (\mathbf{x}_{j}) = \delta_{ij}  \quad \forall E \in \calM, \forall \mathbf{x}_j \in \calDofs.
\label{eq:localBasisUnity}
\end{equation}
\end{definition}

As before, we use the superscript notation to distinguish bases for velocity and pressure, $\Phi^{\vel}$, $\Phi^{p}$. 

Let $\vel_i:= \vel(\mathbf{x}_i), \; \forall \mathbf{x}_i \in \calDofs^{\vel}$, $p_i:= p(\mathbf{x}_i), \; \forall \mathbf{x}_i \in \calDofs^{p}$, denote point-evaluations of the functions $\vel$, $p$ at some appropriate locations $\calDofs^{\vel}$, $\calDofs^{p}$ (often coinciding with control-volume centers). Then, the discrete velocity and pressure solutions are defined in terms of their representation in some choice of nodal basis $\Phi^{\vel}(\calDofs^{\vel})$, $\Phi^{p}(\calDofs^{p})$, with coefficients $\vel_i$, $p_i$, such that
\begin{equation}
\vel_h := \sum_{i = 1}^{|\calDofs^{\vel}|} \vel_i \varphi^{\vel}_i, \quad \varphi^{\vel}_i \in \Phi^{\vel}(\calDofs^{\vel}), \quad\quad 
\p_h:= \sum_{i = 1}^{|\calDofs^{\p}|} \p_i \varphi^{\p}_i, \quad \varphi^{\vel}_i \in \Phi^{p}(\calDofs^{p}).
\label{eq:discSolAnsatz}
\end{equation}

\begin{remark}
The discrete solutions $\vel_h$ and $\p_h$ have the same regularity as the basis functions and since the basis functions are defined to be $C^1$ within elements, gradients can be computed \emph{within} elements. In what follows, this property will be essential for the construction of flux approximations on sub-control-volume faces.
\end{remark}

\subsection{Control-volume test function space}
The following definition is useful when discussing the relationship between finite volumes and finite elements in the context of control-volume finite-element schemes.

\begin{definition}[Control-volume function space]
\label{def:cvfs}
Given a set of control volumes $\widetilde{\calT} \subseteq \calT$,
and the functions $C_K \in L^2 (\widetilde{\Omega})$ with $ \widetilde{\Omega} := \cup_{K\in\widetilde{\calT}} K \subseteq \Omega$ such that
\begin{equation}
C_K(\mathbf{x}) := \begin{cases} c_K \in \PSpace^0(K) &\mathbf{x} \in K, \\ 0 &\mathbf{x} \notin K,\end{cases}
\end{equation}
the discrete function space
\begin{equation}
B_h(\widetilde{\calT}) = \left\lbrace q_h \in L^2 (\widetilde{\Omega}): \; q_h = \sum_{K\in \widetilde{\calT}} C_K \right\rbrace,
\end{equation}
contains all functions $q_h$ that are sums of piece-wise constant functions on control volumes $K$ and the dimension of space $B_h(\widetilde{\calT})$ equals the number of control volumes in $\widetilde{\calT}$.
\end{definition}

\begin{proposition}
\label{prop:fem-fv}
Given a space $V(\Omega) \subset L^2(\Omega)$, and $r \in V(\Omega)$, the following equivalence holds
\begin{equation*}
    \int_\Omega r q_h \;\text{d}x = 0 \quad \forall q_h \in B_h(\widetilde{\calT}) \quad \iff \quad \int_K r \;\text{d}x = 0 \quad \forall K \in \widetilde{\calT}.
\end{equation*}
\end{proposition}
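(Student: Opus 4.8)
The plan is to prove the two implications separately, in each case exploiting that the building blocks $C_K$ of $B_h(\widetilde{\calT})$ are, up to a scalar factor, just the indicator functions of the individual control volumes, as fixed in \cref{def:cvfs}.

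For the direction ``$\Rightarrow$'', I would fix an arbitrary $K \in \widetilde{\calT}$ and choose as test function the particular element $q_h \in B_h(\widetilde{\calT})$ obtained by setting $c_K = 1$ and $c_L = 0$ for every $L \in \widetilde{\calT} \setminus \lbrace K \rbrace$. By \cref{def:cvfs} this $q_h$ equals $1$ on $K$ and vanishes on $\Omega \setminus K$, so that $\int_\Omega r q_h \,\text{d}x = \int_K r \,\text{d}x$. Since the left-hand side is zero by hypothesis, $\int_K r \,\text{d}x = 0$, and $K$ was arbitrary. The possible overlap of control volumes (recall that $\calT$ need not be a partition) plays no role here: each $C_K$ is defined as $c_K$ on $K$ and zero outside, independently of the other control volumes, so this single-volume choice is always a legitimate member of $B_h(\widetilde{\calT})$.

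For the direction ``$\Leftarrow$'', I would take an arbitrary $q_h = \sum_{K \in \widetilde{\calT}} C_K \in B_h(\widetilde{\calT})$ with coefficients $c_K$, and extend it by zero from $\widetilde{\Omega}$ to $\Omega$, consistently with $C_K \equiv 0$ outside $K \subseteq \widetilde{\Omega}$. Linearity of the integral then gives $\int_\Omega r q_h \,\text{d}x = \sum_{K \in \widetilde{\calT}} c_K \int_K r \,\text{d}x$, and every summand vanishes by hypothesis. Integrability of $r q_h$ is guaranteed because $r \in L^2(\Omega)$ and $q_h$ is a bounded piece-wise constant function, hence $r q_h \in L^1(\Omega)$ by the Cauchy--Schwarz inequality. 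I do not expect a genuine obstacle: the statement is essentially the observation that testing against all of $B_h(\widetilde{\calT})$ is equivalent to testing against its spanning single-volume functions. The only points requiring mild care are the bookkeeping around overlapping control volumes and the implicit zero-extension of $q_h$ to all of $\Omega$; once one notes that the single-volume functions lie in $B_h(\widetilde{\calT})$ regardless of overlap, both implications follow immediately.
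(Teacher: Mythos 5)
Your proof is correct and follows essentially the same route as the paper: the paper expands $\int_\Omega r q_h\,\text{d}x = \sum_{K \in \widetilde{\calT}} c_K \int_K r\,\text{d}x$ by linearity and states that the equivalence follows directly, with your explicit choice of the single-volume test function (setting $c_K = 1$ and all other coefficients to zero) being exactly the step the paper leaves implicit. Your additional remarks on zero-extension, integrability via Cauchy--Schwarz, and the irrelevance of control-volume overlap are sound but only make explicit what the paper's one-line computation already contains.
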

\begin{proof}
For each $q_h \in B_h(\widetilde{\calT})$ it holds that 
\begin{equation*}
\int_\Omega r q_h \;\text{d}x = \int_\Omega r \left( \sum_{K \in \widetilde{\calT}} C_K \right) \;\text{d}x  =  \sum_{K \in \widetilde{\calT}}  \int_\Omega r C_K  \;\text{d}x = \sum_{K \in \widetilde{\calT}} c_K \int_K r  \;\text{d}x,
\end{equation*}
from which the equivalence follows directly. 
\end{proof}

\Cref{prop:fem-fv} makes clear how control volumes schemes can be interpreted as finite-element schemes with a particular choice for the test function space. Note that a control volume in $\widetilde{\calT}$ may be fully or partially overlapping with other control volumes in $\widetilde{\calT}$.

\section{Control-volume finite-element (CVFE) schemes}
\label{subsec:cvfeschemes}
The idea of the CVFE schemes presented here is the construction of control volumes around the degree of freedom locations (of a nodal basis) such that the corresponding element-local control-volume face contributions, i.e.\ sub-control-volume faces, are embedded within elements. In other words, this means that sub-control-volume faces never coincide with the element boundary $\partial E$. This ensures that normal derivatives of the approximated functions on faces can be computed using the element-local basis-function gradients.

Following the idea of finite-volume methods, the discretization of \cref{eq:MomentumBalance,eq:MassBalance} is derived by integrating over the control volumes $K$ and applying the divergence theorem:
\begin{subequations}
\label{eqs:discFVBalances}
\begin{align}
   \int_{\partial K} \left[ -2 \mu \boldsymbol{D}(\veldisc) + \pdisc \boldsymbol{I} \right] \cdot \n \, \text{d}A  &= \int_K \boldsymbol{f} \, \text{d}x, \quad \forall K \in \calT^{\vel}, \label{eq:DiscMomentumBalance} \\
   \int_{\partial K} \veldisc \cdot \n \, \text{d}A  &= \int_K q \, \text{d}x, \quad \forall K \in \calT^{\p}, \label{eq:DiscMassBalance}
\end{align}
\end{subequations}
where $\veldisc$ and $\pdisc$ denote the discrete velocity and pressure solution, defined in \cref{eq:discSolAnsatz}. 
Making use of \cref{def:griddisc}, each term can be split into its element-local contribution:
\begin{subequations}
\label{eqs:discEleBalances}
\begin{align}
  \sum_{E \in \calM^{\vel}_K} \sum_{\sigma \in \calE_K} \int_{\sigma \cap E} \left[ -2 \mu \boldsymbol{D}(\veldisc) + \pdisc \boldsymbol{I} \right] \cdot \n \, \text{d}A  &= \sum_{E \in \calM^{\vel}_K} \int_{K \cap E} \boldsymbol{f} \, \text{d}x, \quad \forall K \in \calT^{\vel}, \label{eq:DiscEleMomentumBalance} \\
  \sum_{E \in \calM^{\p}_K} \sum_{\sigma \in \calE_K} \int_{\sigma \cap E} \veldisc \cdot \n \, \text{d}A  &= \sum_{E \in \calM^{\p}_K} \int_{K \cap E} q \, \text{d}x, \quad \forall K \in \calT^\p. \label{eq:DiscEleMassBalance}
\end{align}
\end{subequations}
This allows an element-local calculation of integrals which is particularly convenient for element-centered stiffness matrix assembly algorithms. 
A discrete system of equation, in terms of the given degrees of freedom, is derived by inserting the discrete solution \cref{eq:discSolAnsatz} into the flux balances \cref{eqs:discEleBalances}. 

\begin{remark}
An alternative way to arrive at \cref{eqs:discFVBalances} is to multiply with test functions $\veldisctest \in \boldsymbol{B}_h(\calT^{\vel})$ and $q_h \in B_h(\calT^{p})$, integrate, reformulate as in \cref{prop:fem-fv}, and apply the divergence theorem. This alternative derivation makes clear the interpretation of CVFE schemes as Petrov-Galerkin finite-element schemes.
\end{remark}

In the following sections, we discuss how the choice of control volumes leads to two different discretization schemes that are both based on the formulation in \cref{eqs:discEleBalances}. A third scheme is based using a CVFE scheme on a subspace while treating the complementary subspace with a standard finite-element method. Concrete examples of all three schemes are shown \cref{fig:control-volumes}. However, the basic ideas are first presented in a more generic fashion. 

\subsection{Non-overlapping CVFE schemes}

\begin{figure}
    \centering
    \includegraphics[width=0.7\textwidth]{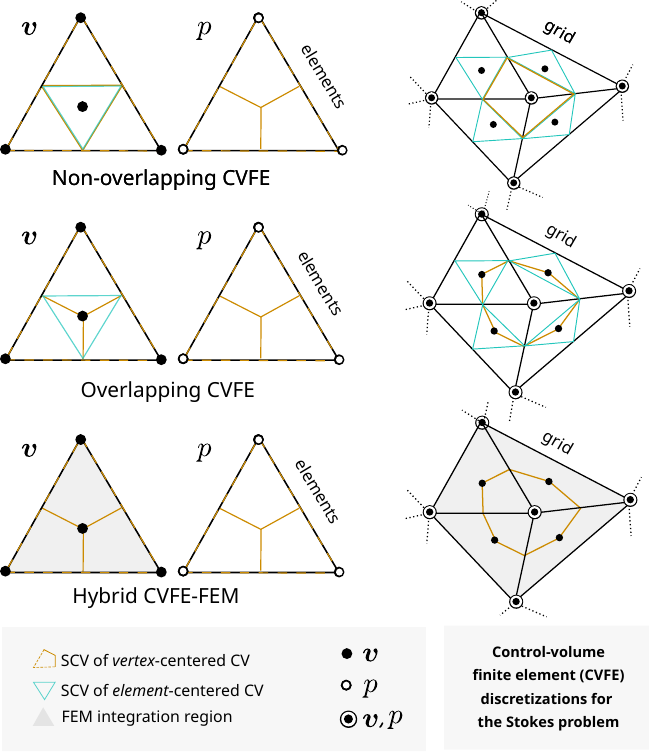}
    \caption{\textbf{CVFE discretizations for the Stokes poblem.} The control volume construction and the degrees of freedom for non-overlapping (top row), overlapping (middle row), and hybrid CVFE-FEM schemes (bottom row) when using the MINI element are shown. The brown vertex-centered ($\vel$, $p$) sub-control volumes of a mesh form a disjoint partition of the domain $\Omega$. Turquoise element-centered control volumes are additional ``overlapping'' control volumes that only contain a strict subset of each element $E \in \calM$ of the mesh. The corner vertices of turquoise control volumes coincide with the face centers of $E$. The figures on the right show the velocity control volumes around degrees of freedom. The control volumes for pressure are not shown but coincide with the brown control volumes when using the overlapping or hybrid approach. The construction trivially generalizes to $3$ dimensions for overlapping and hybrid CVFE-FEM schemes.}
    \label{fig:control-volumes}
\end{figure}

Control volumes for finite-volume schemes are typically chosen such that they form a partition of the domain $\Omega$, i.e.\ $\calT_{\Omega} = \calT$. This choice results in a \emph{non-overlapping CVFE scheme}.

\begin{example}[Vertex-centered $\PSpace_1$- or $\QSpace_1$-CVFE]
\label{example:box}
The probably best-known CVFE scheme, also known as the Box method~\cite{Bank1987Box}, is the non-overlapping vertex-centered method based on a conforming piece-wise linear (or bilinear) Lagrangian trial function space and control volumes $K \in \calT_\Omega = \calT$ are constructed (in 2D) around the vertices $\calV$ by connecting element centers $\mathbf{x}_E$ with element face centers $\mathbf{x}_\epsilon$ of all faces and elements neighboring the vertex. This construction is shown in \cref{fig:control-volumes} for the pressure discretization.
The corresponding test functions are taken in $B_h(\calT)$.
In this work, the $\PSpace_1$-CVFE scheme is used for the pressure subspace for all presented schemes.
\end{example}

For pure cell-centered or vertex-centered schemes, a partition $\calT_{\Omega} = \calT$ is easily found. However, for higher-order polynomial spaces, or for bubble-enriched spaces, creating such a partition for all considered control volumes is not straightforward. This is particularly the case for $d=3$. For a two-dimensional triangular mesh, an example of non-overlapping control volumes for a given choice of degrees of freedom is presented in \cref{subsec:cvs}. 

\subsection{Overlapping CVFE schemes}

A more flexible approach is the construction of partially or fully \emph{overlapping} control volumes. A prominent example of finite-volume methods using overlapping control volumes is given by the so-called staggered grid finite-volume methods, where control volumes associated with different velocity components are overlapping. Discretizing the domain in terms of overlapping control volumes gives rise to \emph{overlapping CVFE schemes}.
To the best of our knowledge, this concept has not yet been explored in the context of CVFE by using a splitting into subsets of control volumes, as explained in the following.

We introduce the idea of using the disjoint splitting
\begin{equation}
    \calT = \calT_{\Omega} \cup \left(\calT \setminus \calT_{\Omega}\right),
\end{equation}
where $\calT_{\Omega}$ denotes the subset of $\calT$ which forms a partition of $\Omega$ (as introduced in \cref{sec:spatialdisc}). The remaining control volumes $\calT \setminus \calT_{\Omega}$ are constructed around the locations of associated degrees of freedom and are allowed to be overlapping with $\calT_{\Omega}$. Thinking of Petrov-Galerkin schemes, this means that the piece-wise constant basis functions have overlapping support but are linearly independent. An example of such overlapping volumes for a second-order scheme (velocity space) is discussed in \cref{subsec:cvs}. An advantage of such overlapping CVFE schemes is that they are simple to implement and control volume construction is more flexible compared to the non-overlapping approach. 

\begin{remark}
Since $\calT_{\Omega}$ is a partition of $\Omega$, schemes based on such construction are locally conservative on the control volumes $K \in \calT_{\Omega}$ (see \cref{sec:conservation}). Balances over the remaining control volumes $K \in \calT \setminus \calT_{\Omega}$ constitute additional conditions on the discrete solution but do not affect the conservation property.
\end{remark}

\subsection{Hybrid CVFE-FEM schemes}
\label{subsec:hcvfeschemes}
Another promising approach, introduced in~\cite{chen2010new}, is based on a hierarchical splitting of the discrete solution space
\begin{equation}
\discspace = \discspace^\downarrow \otimes \discspace^\uparrow.
\label{eq:hierarchicalsplitting}
\end{equation}
This splitting is done such that a domain partitioning into control volumes $\calT_\Omega$ can be easily constructed around degrees of freedom associated with the basis of $\discspace^\downarrow$, whereas degrees of freedom associated with the basis of the space $\discspace^\uparrow$ are treated with a classical finite-element approach without the need for control volumes. This means that different test function spaces are chosen for the two subspaces.

\begin{remark}
In general, we may choose any hierarchical splitting, \cref{eq:hierarchicalsplitting}. However, to maintain the approximation order, the space $\discspace^\downarrow$ should be chosen such that any $\veldisc^\downarrow \in \discspace^\downarrow$ can be described by the finite-volume approach. Therefore, typically  $\discspace^\downarrow$ is chosen as the space of piece-wise linear functions, and the corresponding test space in the CVFE scheme is $B_h(\calT_\Omega)$.
\end{remark}
As an illustration of this concept, let us consider the discrete velocity space and the Stokes momentum balance equation, \cref{eq:MomentumBalance}.
Around the locations of degrees of freedom related to $\discspace^\downarrow$, control volumes $\calT_\Omega$ are given such that \cref{eq:DiscMomentumBalance} can be enforced for all such volumes:
\begin{equation}
   \int_{\partial K} \left[ -2 \mu \boldsymbol{D}(\veldisc) + \pdisc \boldsymbol{I} \right] \cdot \n \, \text{d}A  = \int_K \boldsymbol{f} \, \text{d}x, \quad \forall K \in \calT_\Omega^{\vel}.\label{eq:DiscMomentumBalanceSplitting}
\end{equation}
Conditions for the remaining degrees of freedom are gained by using a standard finite-element approach:
\begin{equation}
    (2\mu\boldsymbol{D}(\veldisc), \boldsymbol{D}(\veldisctest^\uparrow)) - (p, \operatorname{div} \veldisctest^\uparrow) = (\boldsymbol{f}, \veldisctest^\uparrow), 
    \quad \forall \veldisctest^\uparrow \in \discspace^\uparrow.
\label{eq:DiscMomentumBalanceFemSplitting}
\end{equation}

By satisfying these conditions, the discrete solution $\veldisc = \veldisc^\downarrow + \veldisc^\uparrow$ satisfies the flux balances, \cref{eq:DiscMomentumBalanceSplitting}, for all $K \in \calT_\Omega^{\vel}$, and the FEM-type conditions, \cref{eq:DiscMomentumBalanceFemSplitting}, related to $\veldisc^\uparrow$. In particular, this means that the discrete solution satisfies the discrete conservation equation on each control volume and the scheme is locally conservative.

\begin{remark}
Again, an alternative procedure arriving at \cref{eq:DiscMomentumBalanceSplitting} is multiplication with a test function $\veldisctest^{\downarrow} \in \boldsymbol{B}_h (\calT_\Omega)$, reformulation as in \cref{prop:fem-fv}, and application of the divergence theorem. Hence, the form of \cref{eq:DiscMomentumBalanceSplitting} implies the use of piece-wise constant test functions. Also, note that in contrast to \cref{eq:DiscMomentumBalanceFemSplitting} integration by parts is not applicable since $\boldsymbol{B}_h (\calT_\Omega)$ only contains piece-wise constants.
\end{remark}

In this work, we apply this approach to a specific inf-sup stable finite-element pair best known as the MINI element, see \cref{subsec:DiscSpaces}. To the best of our knowledge, this contribution is original. 
The advantage of the hybrid CVFE-FEM approach is the fact that it can easily be applied to higher-order polynomial finite-element function spaces (e.g.\ $\PSpace_k,\;k>1$). 

\subsection{Choice of stable discrete function spaces}
\label{subsec:DiscSpaces}
For finite-element methods, it is well known, see e.g.~\cite{girault2012finite}, that the pairings $\PSpace^d_1 \times \PSpace_1$ or $\PSpace^d_1\times\PSpace_0$
for velocity and pressure, do not yield inf-sup-stable discretizations of \cref{eq:stokes_weak} (i.e.\ do not fulfill the LBB conditions~\cite{BrezziFortin1991}). As discussed in \cite{ye2001relationship}, it is therefore unlikely that a finite-volume scheme that uses the corresponding basis functions for interpolation yields a stable discretization scheme. 
Here, we construct a new control-volume finite-element scheme based on the basis functions of an inf-sup stable finite-element space pairing. Such a stable pairing is given by the MINI element~\cite{Arnold1984}, which is composed of $\PSpace_1$ finite-element functions enriched with a bubble function for the velocity ($\bubblebox{d}$) paired with the standard $\PSpace_1$ finite-element functions for the pressure.  

\paragraph{Velocity function space}
The degrees of freedom for the velocity are located at the vertices and the element center, as shown in \cref{fig:control-volumes}. Using the introduced notation, the set of positions of the degrees of freedom are given by $\calDofs = \calV \cup \lbrace \mathbf{x}_E \rbrace_{E \in \calM}$. The basis function corresponding to the element degree of freedom is constructed from the $\PSpace_1$ basis associated with element vertices, $\varphi_v$, and given as
\begin{equation}
\varphi_E :=  \left[ \prod_{v \in \calV_E} \varphi_v(\mathbf{x}_E)  \right]^{-1} \prod_{v \in \calV_E} \varphi_v.
\end{equation}
The function $\varphi_E$ is called a \emph{bubble function}, since it vanishes on the element boundary ($\varphi_E\arrowvert_{\partial E} = 0$) and is positive in the element interior. Moreover, $\varphi_E(\mathbf{x}_E) = 1$, as we normalized by the product of the $\PSpace_1$ basis functions.

Combining the $\PSpace_1$ basis functions with $\varphi_E$ does not yield an element-local basis that fulfills \cref{eq:localBasisUnity}. To achieve this, we propose to use instead the following modified $\PSpace_1$ basis functions,
\begin{equation}
\tilde{\varphi}_v := \varphi_v - \varphi_v(\mathbf{x}_E) \varphi_E, \quad \forall  v \in \calV_E.
\end{equation}

The local basis for the function space $\bubblespace$, is then defined by the basis functions $\lbrace \varphi_E \rbrace \cup \lbrace \tilde{\varphi}_v \; \vert \; v \in \calV_E \rbrace$. 
These basis functions are visualized on a reference element in \cref{fig:basisfunctions}. The definition of the respective basis function is given in the caption of each sub-figure. For this reference element it holds that $\mathbf{x}_E = ( \frac{1}{3}, \frac{1}{3})$ such that $\varphi_v(\mathbf{x}_E) = \frac{1}{3} $ from which the function definitions, provided in the figure captions, directly follows. \cref{eq:localBasisUnity} obviously holds and the basis functions sum up to one. 
\begin{figure}
    \centering
     \begin{subfigure}[b]{0.24\textwidth}
         \centering
         \includegraphics[width=\textwidth]{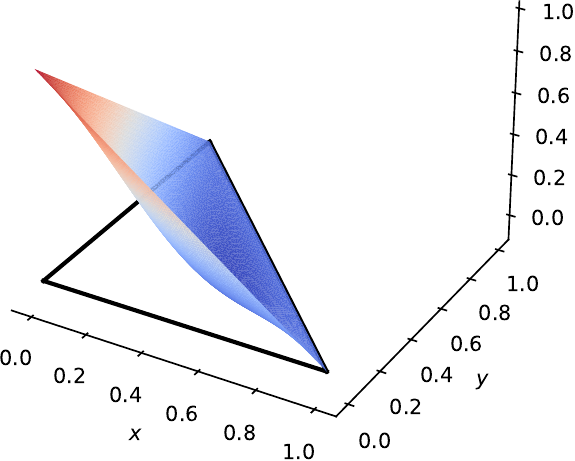}
         \caption{$\tilde{\varphi}_1 = 1-x -y - \frac{1}{3}\varphi_E$ }
         \label{fig:basis1}
     \end{subfigure}
     \hfill
     \begin{subfigure}[b]{0.24\textwidth}
         \centering
         \includegraphics[width=\textwidth]{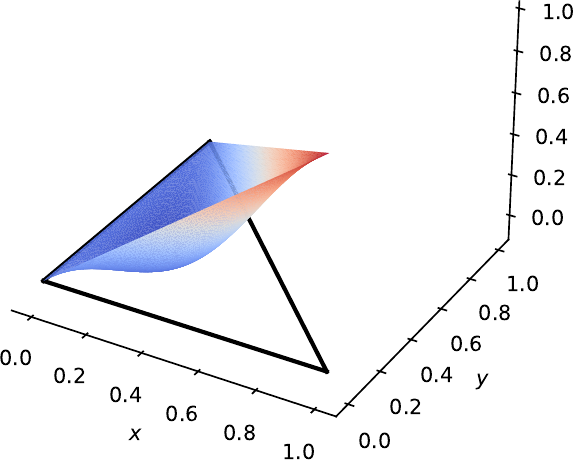}
         \caption{$\tilde{\varphi}_2 = x - \frac{1}{3}\varphi_E$}
         \label{fig:basis2}
     \end{subfigure}
     \hfill     
     \begin{subfigure}[b]{0.24\textwidth}
         \centering
         \includegraphics[width=\textwidth]{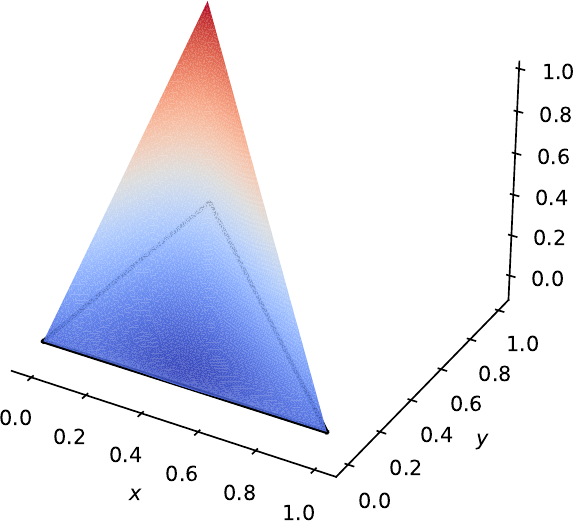}
         \caption{$\tilde{\varphi}_3 = y - \frac{1}{3}\varphi_E$}
         \label{fig:basis3}
     \end{subfigure}
     \hfill
     \begin{subfigure}[b]{0.24\textwidth}
         \centering
         \includegraphics[width=\textwidth]{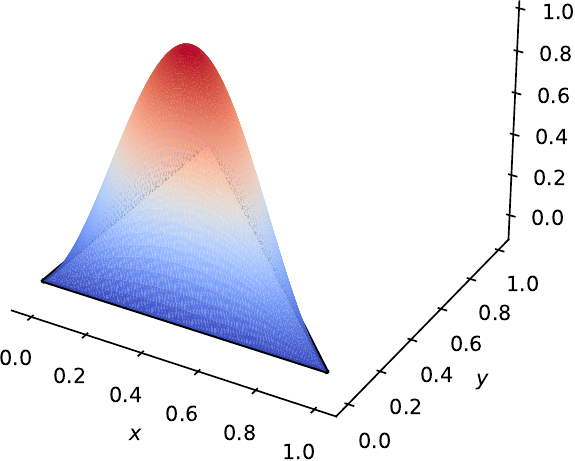}
         \caption{$\varphi_E = 27xy(1-x-y)$}
         \label{fig:basis4}
     \end{subfigure}
    \caption{\textbf{$\bubblespace$ basis functions on the two-dimensional reference element.} The function $\varphi_E$ is a bubble function (vanishes on all faces) on the shown element, and $\varphi_k$, $k=1,2,3$, are modified nodal basis functions.}
    \label{fig:basisfunctions}
\end{figure}

\paragraph{Pressure function space}
The pressure space for the MINI element is
given by the usual $\PSpace_1$ finite-element space. Degrees of freedom are located at the element vertices, see \cref{fig:control-volumes}.

\subsection{Construction of control volumes}
\label{subsec:cvs}
According to the chosen basis functions and the associated degrees of freedom, control volumes are constructed. In general, the element-local basis functions are smooth within each element but not $C^1$ across element boundaries (they may even be discontinuous depending on the chosen function space). Therefore, for being able to calculate gradients, needed for calculating fluxes, sub-control-volume faces are assumed to have intersections of zero measure with $\partial E$, i.e.\ $\meas{\sigma \cap \partial E} = 0$. This needs to be accounted for when constructing the control volumes.

In the following, we discuss the control volume construction for the function space bases of the previous section. We distinguish between non-overlapping CVFE, overlapping CVFE (cf. \cref{subsec:cvfeschemes}), and hybrid CVFE-FEM discretizations (cf. \cref{subsec:hcvfeschemes}). Here, these different approaches are applied only to the velocity space. For the pressure space, partitioning the domain into control volumes around each vertex is straightforward. The resulting non-overlapping CVFE discretization scheme for the pressure subspace exactly is the Box method ($\PSpace_1$-CVFE) as described in \cref{example:box}.

\paragraph{Non-overlapping CVFE method}
An example of non-overlapping control volumes for the two-dimensional case on a triangular element is shown in \cref{fig:control-volumes}. Here, we need to construct volumes for velocity degrees of freedom differently than for pressure degrees of freedom, meaning that flux balances for momentum and mass are given for different volumes. For the three-dimensional case, it is quite challenging to construct a non-overlapping partition, in particular, due to the requirement of zero-measure intersections $\meas{\sigma \cap \partial E} = 0$. While preparing this manuscript, we became aware of a very recently published work \cite{yang2023} that discusses the construction of a non-overlapping CVFE scheme for the MINI element, restricted to the case of $d=2$.  

\paragraph{Overlapping CVFE method}
As shown in \cref{fig:control-volumes}, the control volumes are chosen as the ``boxes'' (dual-mesh elements as in \cref{example:box}) constructed around each vertex and the (overlapping) element-centered control volume whose corner vertices coincide with the element's face centers $\mathbf{x}_\gridFace$.
With this choice, the control volumes of $\calT^{\vel}_\Omega$ are chosen the same as for the pressure degrees of freedom, meaning that for all $K \in \calT^{\vel}_\Omega$ both momentum and mass balances hold and both mass and momentum are therefore locally conserved by construction.
There are various ways to construct the additional volume around the bubble degree of freedom, thus giving some flexibility. The one presented in this work easily extends to three dimensions, i.e.\ tetrahedrons, yielding an octahedron. (In future work, it might be interesting to investigate the optimal choice of this overlapping control volume.)

\paragraph{Hybrid CVFE-FEM method}
For vertex degrees of freedom, the same control volumes as for the overlapping scheme are used, that is, ``boxes'' (dual-mesh elements as in \cref{example:box}) are constructed around each vertex. Since the bubble degrees of freedom are treated in a finite-element fashion, no additional control volumes need to be constructed. 

The function space splitting, as discussed in \cref{subsec:hcvfeschemes}, for the MINI element is given as follows
\begin{equation}
    \discspace^\downarrow = \text{span}\lbrace \varphi_v \rbrace_{v \in \calV}, \quad \discspace^\uparrow = \text{span}\lbrace \varphi_E \rbrace_{E \in \calM}.
\end{equation}
With this, \cref{eq:DiscMomentumBalanceFemSplitting} is enforced for the basis functions $\lbrace \varphi_E \rbrace_{E \in \calM}$. Since $\text{supp}(\varphi_E) = E$, the stencils for assembly of local residuals are the same as for the non-overlapping and overlapping schemes. The control volumes and the support region related to the bubble degrees of freedom are depicted in \cref{fig:control-volumes}. The advantage of this approach is the fact that it can easily be extended to higher-order polynomial spaces.

\subsection{Local mass and momentum conservation}
\label{sec:conservation}

The equivalence of the MINI element and equal-order stabilized Petrov-Galerkin finite-element schemes~\cite{Brezzi1984,Hughes1986} has been established by \cite{Pierre1988,Bank1990}. However, as evident by this equivalence, stability comes at the expense of degrading local mass conservation~\cite{Cioncolini2019}. Similarly, the stabilized lowest-order $\PSpace_1-\PSpace_0$-based CVFE scheme of \cite{Zhang2015} also sacrifices local mass conservation for stability. Local conservation properties may be achieved through post-processing \cite{Matthies2007}.

All schemes presented in this work satisfy local mass conservation for the control volumes $\calT^{\p}_{\Omega}$ and local momentum conservation for the control volumes $\calT^{\vel}_{\Omega}$. This is directly given by the fact that \cref{eq:DiscEleMassBalance,eq:DiscEleMomentumBalance} represent local flux balances and the property that there is a subset of control volumes that form a partition such that for each $\sigma \in \calE_K$, $K\in \calT_\Omega$, there is another control volume $L\in \calT_\Omega$ such that $\sigma \in \calE_L$ and $\n_{L,\sigma} = - \n_{K,\sigma}$. Finally, using the continuity of $\vel_h$ on all $\sigma \in \calE^{\p}_{\Omega}$ yields local mass conservation, whereas the continuity of $p_h$ and $\nabla \vel_h$ on $\sigma \in \calE^{\vel}_{\Omega}$ yields local momentum conservation. Therefore, all schemes are locally mass- and momentum conservative but they differ in the control volumes where these hold. 

\begin{remark}
If the integrals in \cref{eq:DiscEleMassBalance,eq:DiscEleMomentumBalance} are approximated by evaluating the integrands at the face centers, then the above continuity assumptions can be weakened and only continuity at the sub-control-volume face centers is required to guarantee local conservation. All presented schemes satisfy continuity on the whole sub-control-volume faces. Therefore, appropriate quadrature rules are used for calculating integrals. 
\end{remark}

\section{Numerical results}
\label{sec:numeric}

Using standard finite-element theory, it can be proven (e.g.~\cite{Boffi2013}) that the MINI element is linearly convergent in both velocity ($H^1$-norm) and pressure ($L^2$-norm). Recently, superconvergence $\mathcal{O}(h^{3/2})$ in the pressure and in the linear part of the velocity has been shown by \cite{Eichel2011} for certain types of structured meshes. Recent numerical evidence suggests, that superconvergence can also be observed on unstructured grids when using mesh optimization tools (for Delaunay meshes) \cite{Cioncolini2019}.

For the finite-element method using the MINI element, as pointed out in \cite{Verfrth1989} and further discussed by \citep{Bank1990,Russo1995,Kim2000}, the linear part of the discrete velocity seems to be a better approximation of the solution $\vel$ than the full discrete velocity including the bubble function contribution, and some a-posterior error estimates are therefore based on the linear velocity part. In particular, this suggests that the bubble contribution may only be needed for stability but does not improve the approximation \cite{Cioncolini2019}. In the following, we reiterate on these observations in the light of the presented control-volume finite-element schemes. 

Here, we consider four different schemes constructed based on the MINI element:
\begin{itemize}
    \item[] $\bubbleboxname{d}{nov}$: non-overlapping CVFE scheme
    \item[] $\bubbleboxname{d}{ov}$: overlapping CVFE scheme
    \item[] $\bubbleboxname{d}{hy}$: hybrid CVFE-FEM scheme
    \item[] $\bubbleboxname{d}{fem}$: classical MINI FEM
\end{itemize}
As mentioned above, the non-overlapping scheme cannot be easily extended to 3D, which is why for such cases, only the results of the other schemes are shown. 

All presented results have been conducted with the open-source simulator \dumux~\cite{Koch2021Dumux} which is based on DUNE~\cite{Dune2021}. We included an implementation of the $\bubbleboxname{d}{ov}$ discretization scheme as part of \dumux and made it publicly available as part of \dumux release 3.7~\cite{Oukili2023}.

\subsection{Preconditioner and linear solver iterations}
Discretizing the continuous problem results in a linear system $Ax = b$ which is often solved with iterative methods such as Krylov subspace methods. The operator preconditioning framework~\cite{Hiptmair2006,Mardal2010} provides a recipe to design parameter-robust preconditioners for Krylov subspace methods.

For the Stokes problem, \cref{eq:stokes}, it is known that the coefficient operator
\begin{equation}
    \mathcal{A} = \begin{bmatrix}
    -2\mu \boldsymbol{D} & \nabla \\
     \operatorname{div} & \\
    \end{bmatrix}
\end{equation}
is an isomorphism, cf.~\cite[Def.~3.5]{Braess2007}, mapping to the dual space $\boldsymbol{H}^{-1} \times L^2$ and a canonical choice for a preconditioner of the continuous problem is the Riesz map~\cite{Mardal2010},
\begin{equation}
    \mathcal{B} = \begin{bmatrix}
    -2\mu \boldsymbol{D} & \\
    & (2\mu)^{-1} I
    \end{bmatrix}^{-1},
\end{equation}
which is an isomorphism that maps from the dual space back to $\boldsymbol{H}^1_0 \times L^2$. It follows that $\mathcal{B}\mathcal{A}$ is an endomorphism (mapping from $\boldsymbol{H}^1_0 \times L^2$ to $\boldsymbol{H}^1_0 \times L^2$).

A necessary condition for the mappings to be isomorphisms is the inf-sup condition (e.g.~\cite[Thm.~3.6]{Braess2007}). It is therefore argued within the operator preconditioning framework~\cite{Mardal2010} that only if \cref{eq:stokes_weak} is discretized by an inf-sup stable discretization scheme (cf.~\cite[Lemma~3.7 \& Remark]{Braess2007}), the discrete version of $\mathcal{B}$ is a robust preconditioner in the sense that the condition number of the preconditioned system (and therefore the number of iterations of the preconditioned Krylov subspace method) is uniformly bounded independently of the discretization parameter (grid refinement) or choice of model parameters. In this work, we will use this argument to provide a strong indication of (inf-sup) stability of the suggested discretization methods by investigating numerically the stability of iteration counts of a left-preconditioned GMRes solver with grid refinement.

To this end, we are solving the discrete linear system of equations $P(J\Delta x - r) = 0$ for the update vector $\Delta x = \left[ \Delta \vel_h, \Delta p_h \right ]^T$ (update with respect to the initial guess $x_0$), where
\begin{equation}
    J =
    \begin{bmatrix} A & B \\ C & 0 \end{bmatrix},
    \quad P =
    \begin{bmatrix} A^{-1} & 0 \\ 0 & \tilde{S}^{-1} \end{bmatrix}
    \begin{bmatrix} 1 & 0 \\ -CA^{-1} & 1 \end{bmatrix},
    \quad \tilde{S} := (2 \mu)^{-1} I,
\end{equation}
denote the discrete Jacobian operator, the discrete preconditioner operator---an approximate inverse of $J$ with block-triagonal structure, and the approximate Schur complement operator $\tilde{S}$, respectively. Note that $\tilde{S}$ corresponds to a viscosity-scaled pressure mass matrix.

The matrices $A$ and $\tilde{S}$ are inverted using LU decomposition. To ensure the invertibility of $A$, we choose mixed boundary conditions such that $\partial \Omega_D \neq \emptyset$ in all numerical tests.
The iterative solver starts from a random initial vector which represents a random function in the chosen discrete function space. The degrees of freedom on the Dirichlet boundary are zero and the remaining degrees of freedom are drawn from the uniform distribution on $[-1, 1]$. The solver terminates once the preconditioned residual norm is reduced by a factor $10^{10}$.

\subsection{Convergence tests}
Within this section, four different convergence test cases are investigated: (1) 2D Donea-Huerta \cite{Donea2003}; (2) 2D Bercovier–Engelman \cite{Boyer2017}; (3) 3D Taylor-Green \cite{Boyer2017}; (4) Stokes flow in a circular tube (radially-symmetric solution in cylinder coordinates).

The convergence rates are investigated for the meshes shown in \cref{fig:grids}, i.e.\ Delaunay and randomly distorted grids. Delaunay meshes are generated with gmsh \cite{gmsh}, and the randomly distorted meshes are constructed by shifting the grid vertices of a structured grid by randomly generated scaling factors (maximum scaling is set to 20\% of the discretization length).
We report $L^2$ and $H^1$ convergence rates and the number of linear solver iterations (it). The discretization lengths, $h^\p$ and $h^{\vel}$, are calculated as $(|\Omega| |\calDofs^{\p}|^{-1})^{\frac{1}{d}}$, $(|\Omega| |\calDofs^{\vel}|^{-1})^{\frac{1}{d}}$, where $|\calDofs^{\p}|,|\calDofs^{\vel}|$ correspond to the number of degrees of freedom. 
The source terms, calculated based on the analytical solutions, need to be integrated for the presented schemes, which is done by using a quadrature rule.

\begin{figure}
    \centering
     \begin{subfigure}[b]{0.24\textwidth}
         \centering
         \includegraphics[width=\textwidth]{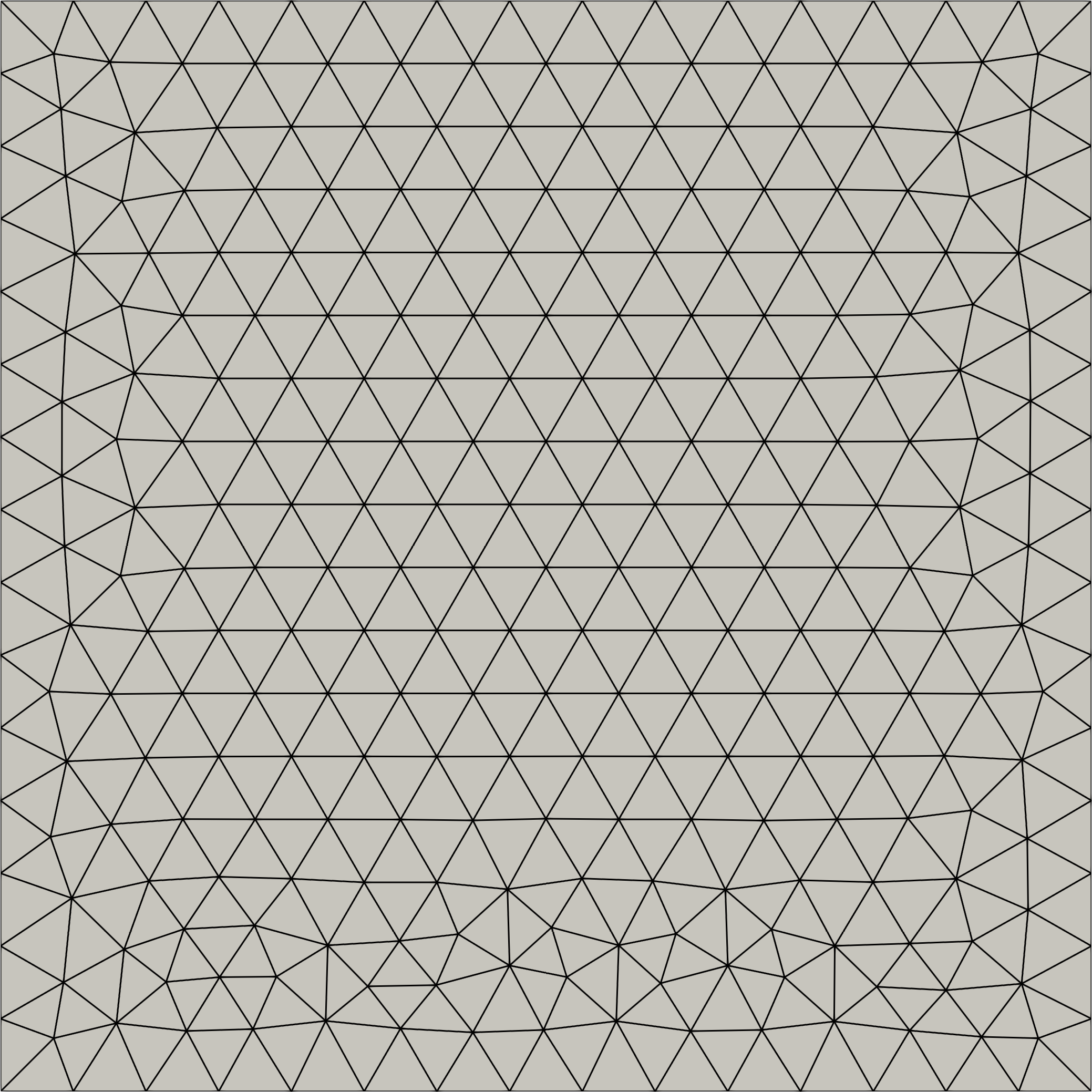}
         \caption{2D Delaunay}
         \label{fig:delaunaygrid2d}
     \end{subfigure}
     \hfill
     \begin{subfigure}[b]{0.24\textwidth}
         \centering
         \includegraphics[width=\textwidth]{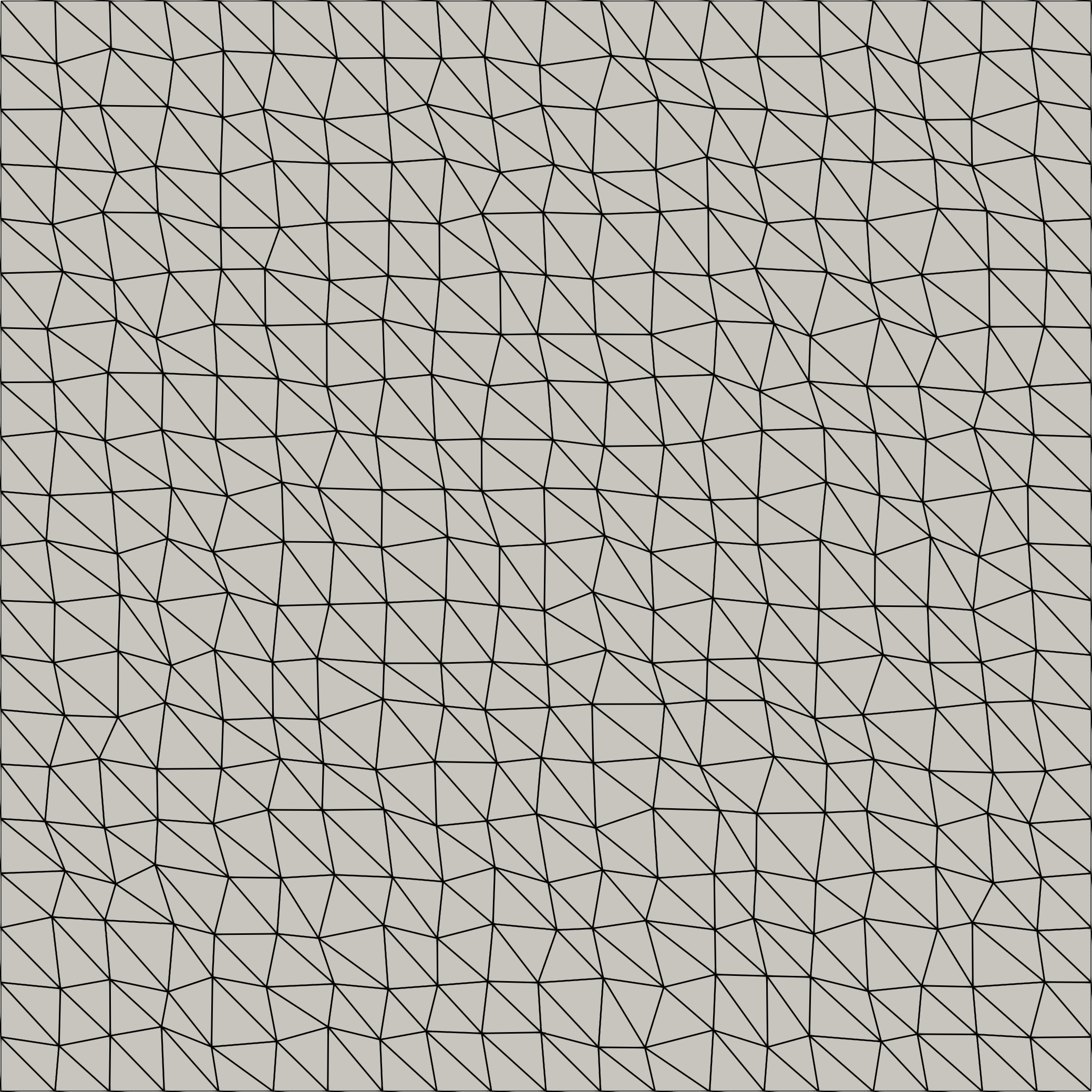}
         \caption{2D randomly distorted}
         \label{fig:randomgrid2d}
     \end{subfigure}
     \hfill     
     \begin{subfigure}[b]{0.24\textwidth}
         \centering
         \includegraphics[width=\textwidth]{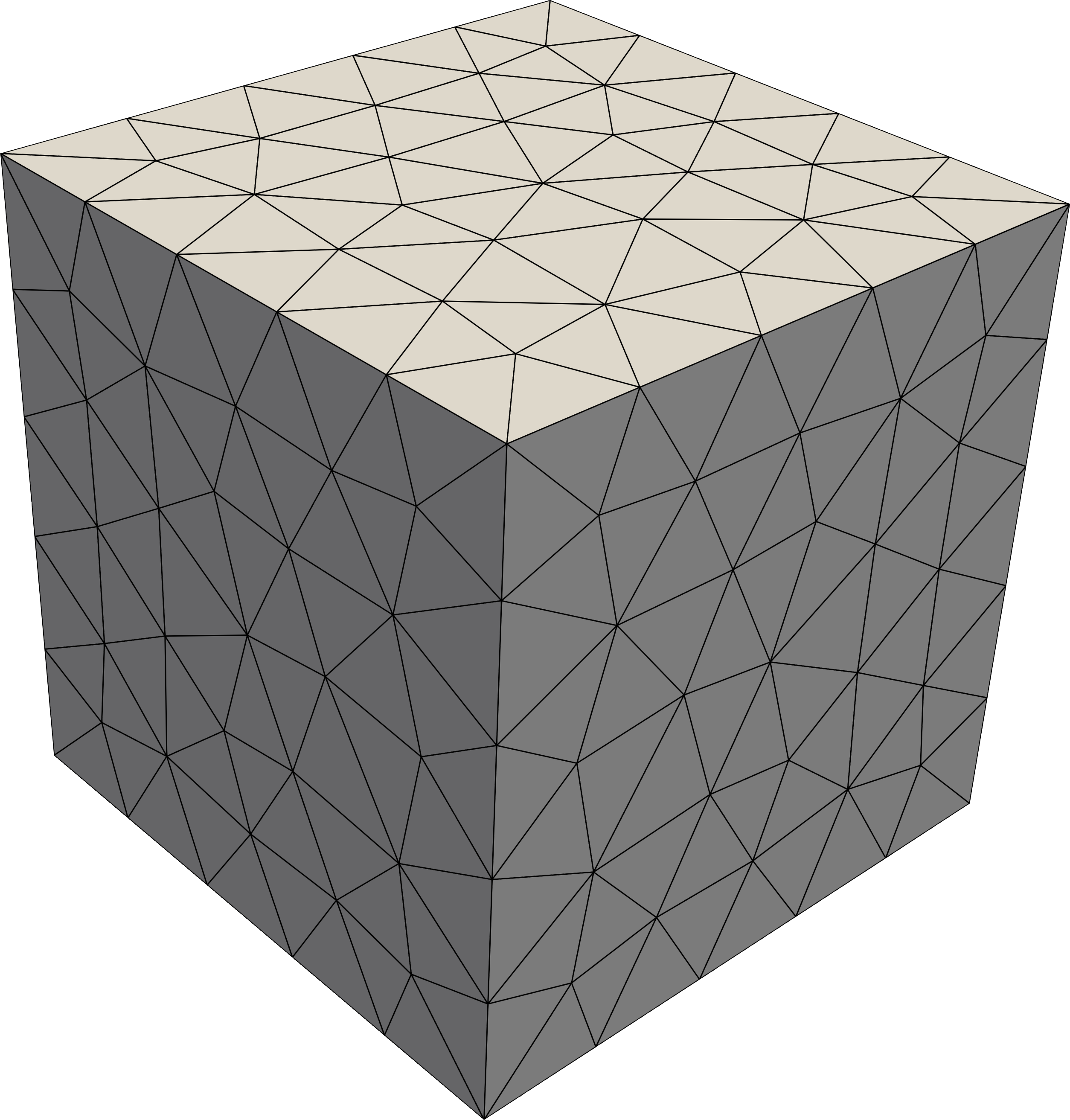}
         \caption{3D Delaunay}
         \label{fig:delaunaygrid3d}
     \end{subfigure}
     \hfill
     \begin{subfigure}[b]{0.24\textwidth}
         \centering
         \includegraphics[width=\textwidth]{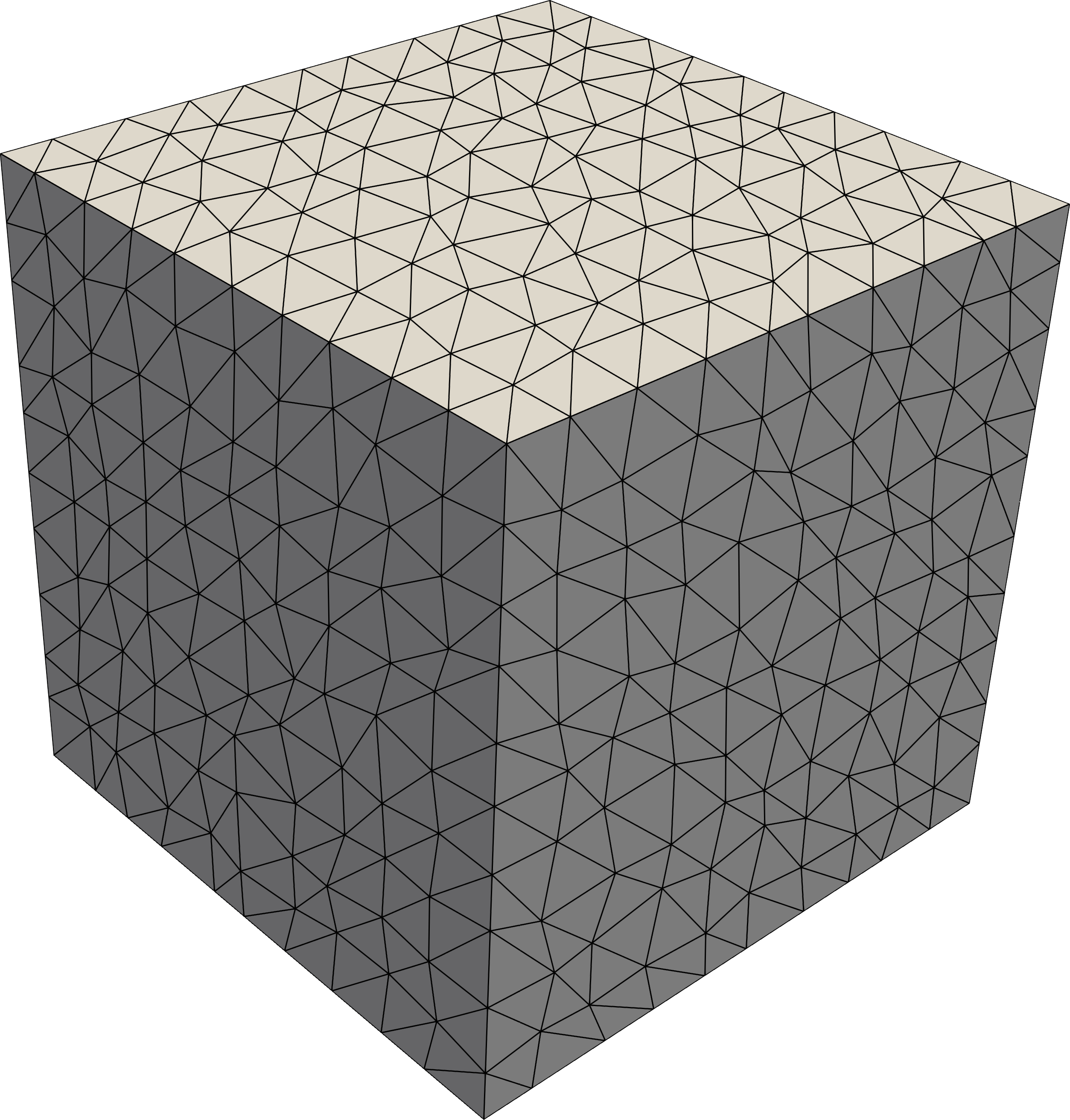}
         \caption{3D randomly distorted}
         \label{fig:randomgrid3d}
     \end{subfigure}
    \caption{Grids}
    \label{fig:grids}
\end{figure}

\subsubsection{Donea-Huerta}
This test case has been presented in \cite{Donea2003}, with an analytical solution as given in \labelcref{sec:appendix:donea}. Mixed boundary conditions matching the analytical solution are set for the momentum balance, i.e.\ Dirichlet conditions on the left and lower boundaries and Neumann conditions on the right and upper boundaries of the domain $\Omega = (0,1)^2$.
\begin{table}[!hbtp]
\begin{small}
  \centering
  \caption{\textbf{Convergence study}. Donea-Huerta test case (2D) on a Delaunay grid.}
  \label{tab:donea-delaunay}
  \begin{tabular}{l*{9}{l}}
    \toprule
scheme &     $h^p$ & $\lVert p_h - p\rVert_{L^2}$ &rate & $h^{\boldsymbol{v}}$ & $\lVert \boldsymbol{v}_h - \boldsymbol{v}\rVert_{L^2}$ &rate & $\lVert \boldsymbol{v}_h - \boldsymbol{v}\rVert_{H^1}$ &rate & it \\
    \midrule
    \multirow{6}{*}{\bubbleboxovtable{2}} & 1.0e-01 & 5.64e-03 & - & 6.2e-02 & 7.48e-04 & - & 1.38e-02 & - & 24 \\
     & 5.7e-02 & 2.51e-03 & 1.43 & 3.4e-02 & 2.09e-04 & 2.16 & 7.44e-03 & 1.05 & 26 \\
     & 3.1e-02 & 8.72e-04 & 1.70 & 1.8e-02 & 5.75e-05 & 2.02 & 3.77e-03 & 1.06 & 27 \\
     & 1.6e-02 & 3.20e-04 & 1.53 & 9.3e-03 & 1.50e-05 & 2.03 & 1.93e-03 & 1.01 & 27 \\
     & 8.1e-03 & 1.18e-04 & 1.45 & 4.7e-03 & 3.87e-06 & 1.96 & 9.67e-04 & 1.01 & 27 \\
     & 4.1e-03 & 4.30e-05 & 1.48 & 2.4e-03 & 9.76e-07 & 2.01 & 4.88e-04 & 1.00 & 27 \\
    \hline
    \multirow{6}{*}{\bubbleboxhytable{2}} & 1.0e-01 & 6.93e-03 & - & 6.2e-02 & 7.19e-04 & - & 1.39e-02 & - & 26 \\
     & 5.7e-02 & 3.19e-03 & 1.37 & 3.4e-02 & 2.04e-04 & 2.13 & 7.52e-03 & 1.04 & 28 \\
     & 3.1e-02 & 1.12e-03 & 1.68 & 1.8e-02 & 5.68e-05 & 2.00 & 3.81e-03 & 1.06 & 29 \\
     & 1.6e-02 & 4.11e-04 & 1.54 & 9.3e-03 & 1.49e-05 & 2.01 & 1.96e-03 & 1.00 & 29 \\
     & 8.1e-03 & 1.53e-04 & 1.44 & 4.7e-03 & 3.86e-06 & 1.96 & 9.78e-04 & 1.01 & 29 \\
     & 4.1e-03 & 5.54e-05 & 1.49 & 2.4e-03 & 9.76e-07 & 2.01 & 4.94e-04 & 1.00 & 29 \\
    \hline
    \multirow{6}{*}{\bubbleboxnovtable{2}} & 1.0e-01 & 5.02e-03 & - & 6.2e-02 & 4.87e-04 & - & 1.36e-02 & - & 24 \\
     & 5.7e-02 & 2.39e-03 & 1.31 & 3.4e-02 & 1.40e-04 & 2.11 & 7.41e-03 & 1.02 & 26 \\
     & 3.1e-02 & 8.53e-04 & 1.66 & 1.8e-02 & 3.87e-05 & 2.01 & 3.76e-03 & 1.06 & 27 \\
     & 1.6e-02 & 3.16e-04 & 1.52 & 9.3e-03 & 1.03e-05 & 2.00 & 1.93e-03 & 1.00 & 27 \\
     & 8.1e-03 & 1.18e-04 & 1.45 & 4.7e-03 & 2.66e-06 & 1.96 & 9.67e-04 & 1.01 & 27 \\
     & 4.1e-03 & 4.27e-05 & 1.48 & 2.4e-03 & 6.75e-07 & 2.00 & 4.88e-04 & 1.00 & 27 \\
    \hline
    \multirow{6}{*}{\bubbleboxfemtable{2}} & 1.0e-01 & 8.72e-03 & - & 6.2e-02 & 7.67e-04 & - & 1.43e-02 & - & 25 \\
     & 5.7e-02 & 4.63e-03 & 1.12 & 3.4e-02 & 2.11e-04 & 2.19 & 7.67e-03 & 1.05 & 28 \\
     & 3.1e-02 & 1.72e-03 & 1.59 & 1.8e-02 & 4.97e-05 & 2.26 & 3.84e-03 & 1.08 & 29 \\
     & 1.6e-02 & 6.30e-04 & 1.53 & 9.3e-03 & 1.22e-05 & 2.12 & 1.97e-03 & 1.01 & 29 \\
     & 8.1e-03 & 2.39e-04 & 1.42 & 4.7e-03 & 2.90e-06 & 2.08 & 9.80e-04 & 1.01 & 29 \\
     & 4.1e-03 & 8.59e-05 & 1.50 & 2.4e-03 & 7.30e-07 & 2.01 & 4.94e-04 & 1.00 & 29 \\
    \hline
  \bottomrule
  \end{tabular}
  \end{small}
\end{table}

\begin{table}[!hbtp]
\begin{small}
  \centering
  \caption{\textbf{Convergence study}. Donea-Huerta test case (2D) on a randomly distorted grid.}
  \label{tab:donea-distorted}
  \begin{tabular}{l*{9}{l}}
    \toprule
scheme &     $h^p$ & $\lVert p_h - p\rVert_{L^2}$ &rate & $h^{\boldsymbol{v}}$ & $\lVert \boldsymbol{v}_h - \boldsymbol{v}\rVert_{L^2}$ &rate & $\lVert \boldsymbol{v}_h - \boldsymbol{v}\rVert_{H^1}$ &rate & it \\
    \midrule
    \multirow{6}{*}{\bubbleboxovtable{2}} & 9.1e-02 & 7.75e-03 & - & 5.6e-02 & 6.54e-04 & - & 1.55e-02 & - & 27 \\
     & 4.8e-02 & 3.10e-03 & 1.41 & 2.8e-02 & 1.89e-04 & 1.84 & 7.96e-03 & 0.99 & 30 \\
     & 2.4e-02 & 1.40e-03 & 1.19 & 1.4e-02 & 5.05e-05 & 1.93 & 4.01e-03 & 1.00 & 30 \\
     & 1.2e-02 & 6.32e-04 & 1.17 & 7.2e-03 & 1.27e-05 & 2.01 & 2.01e-03 & 1.00 & 30 \\
     & 6.2e-03 & 2.99e-04 & 1.09 & 3.6e-03 & 3.21e-06 & 1.98 & 1.01e-03 & 1.00 & 30 \\
     & 3.1e-03 & 1.45e-04 & 1.05 & 1.8e-03 & 7.94e-07 & 2.02 & 5.02e-04 & 1.00 & 30 \\
    \hline
    \multirow{6}{*}{\bubbleboxhytable{2}} & 9.1e-02 & 9.17e-03 & - & 5.6e-02 & 6.74e-04 & - & 1.56e-02 & - & 30 \\
     & 4.8e-02 & 3.67e-03 & 1.42 & 2.8e-02 & 1.94e-04 & 1.84 & 8.01e-03 & 0.99 & 33 \\
     & 2.4e-02 & 1.70e-03 & 1.15 & 1.4e-02 & 5.18e-05 & 1.93 & 4.03e-03 & 1.00 & 34 \\
     & 1.2e-02 & 7.58e-04 & 1.19 & 7.2e-03 & 1.30e-05 & 2.01 & 2.02e-03 & 1.00 & 34 \\
     & 6.2e-03 & 3.56e-04 & 1.10 & 3.6e-03 & 3.29e-06 & 1.99 & 1.01e-03 & 1.00 & 34 \\
     & 3.1e-03 & 1.72e-04 & 1.06 & 1.8e-03 & 8.15e-07 & 2.02 & 5.05e-04 & 1.00 & 34 \\
    \hline
    \multirow{6}{*}{\bubbleboxnovtable{2}} & 9.1e-02 & 7.42e-03 & - & 5.6e-02 & 6.92e-04 & - & 1.55e-02 & - & 27 \\
     & 4.8e-02 & 3.07e-03 & 1.36 & 2.8e-02 & 2.00e-04 & 1.83 & 7.97e-03 & 0.98 & 30 \\
     & 2.4e-02 & 1.40e-03 & 1.17 & 1.4e-02 & 5.27e-05 & 1.95 & 4.01e-03 & 1.00 & 30 \\
     & 1.2e-02 & 6.33e-04 & 1.17 & 7.2e-03 & 1.33e-05 & 2.00 & 2.01e-03 & 1.00 & 30 \\
     & 6.2e-03 & 3.00e-04 & 1.09 & 3.6e-03 & 3.36e-06 & 1.99 & 1.01e-03 & 1.00 & 30 \\
     & 3.1e-03 & 1.45e-04 & 1.05 & 1.8e-03 & 8.32e-07 & 2.02 & 5.02e-04 & 1.00 & 30 \\
    \hline
    \multirow{6}{*}{\bubbleboxfemtable{2}} & 9.1e-02 & 1.21e-02 & - & 5.6e-02 & 1.02e-03 & - & 1.63e-02 & - & 31 \\
     & 4.8e-02 & 5.03e-03 & 1.36 & 2.8e-02 & 2.79e-04 & 1.92 & 8.19e-03 & 1.02 & 34 \\
     & 2.4e-02 & 2.49e-03 & 1.05 & 1.4e-02 & 7.12e-05 & 2.00 & 4.09e-03 & 1.01 & 35 \\
     & 1.2e-02 & 1.10e-03 & 1.19 & 7.2e-03 & 1.78e-05 & 2.01 & 2.04e-03 & 1.01 & 35 \\
     & 6.2e-03 & 5.12e-04 & 1.12 & 3.6e-03 & 4.44e-06 & 2.01 & 1.02e-03 & 1.00 & 35 \\
     & 3.1e-03 & 2.47e-04 & 1.06 & 1.8e-03 & 1.11e-06 & 2.01 & 5.08e-04 & 1.00 & 35 \\
    \hline
  \bottomrule
  \end{tabular}
  \end{small}
\end{table}

The numerical results are shown in \cref{tab:donea-delaunay,tab:donea-distorted}. For all schemes, a 2nd order $L^2$ and a first order $H^1$ convergence rate is observed for the velocity on both grid types. All schemes also show $L^2$ super-convergence in pressure, i.e.\ $\mathcal{O}(h^{\frac{3}{2}})$, on Delaunay grids, whereas first-order convergence is observed on randomly distorted grids. 
$H^1$ velocity errors are almost the same for all schemes. $L^2$ velocity errors are the smallest for the non-overlapping scheme on the Delaunay grids but not on the randomly distorted grids. There, the hybrid and overlapping schemes have the smallest $L^2$ velocity errors. This is most likely caused by the fact that the non-overlapping control volumes are more sensitive to grid distortion. 
The smallest pressure errors are observed for the overlapping and the non-overlapping schemes.

The number of linear solver iterations (it) is stable with grid refinement, indicating uniform boundedness of the condition number of the discrete preconditioned linear system in the discretization parameter $h$. Since this property valid for the continuous system is only expected to carry over to the discrete system if the discretization is inf-sup stable, bounded condition numbers indicate inf-sup stability of the discretization scheme.
The number of iterations appears to be slightly larger for the FEM than for the CVFE schemes, except for the hybrid scheme. This is consistently observed in all subsequent tests as well.

\subsubsection{Bercovier–Engelman}
On $\Omega = \left(0, 1\right)^2$ and with $\mu = 1$, an exact solution for homogeneous Dirichlet boundary conditions is given in \cite{Boyer2017} for $\vel = (v_x, v_y)^T$:
\begin{align}
    v_x(x,y) = h(x, y), \quad
    v_y(x,y) = -h(y, x), \quad
    p(x,y) = \left(x - 0.5\right)\left(y - 0.5\right),
\end{align}
where $h(x,y) = -256x^2(x - 1)^2 y(y - 1)(2y - 1)$, and for the right-hand side
\begin{align}
    f_x(x,y) = g(x,y) + \left(y - 0.5\right), \quad
    f_y(x,y) = -g(y,x) + \left(x - 0.5\right),
\end{align}
where $g(x,y) = 256 \left[ x^2(x-1)^2(12y-6)+y(y-1)(2y-1)(12x^2 - 12x + 2) \right]$.

We remark that since the analytical pressure solution is bi-linear it cannot be exactly approximated by the $\boxspace$ basis functions for pressure. 
Mixed boundary conditions matching the analytical solution are set for the momentum balance, i.e.\ Dirichlet conditions on the left and lower boundaries and Neumann conditions on the right and upper boundaries.

\begin{table}[!hbtp]
\begin{small}
  \centering
  \caption{\textbf{Convergence study}. Bercovier–Engelman test case (2D) on a Delaunay grid.}
  \label{tab:engelman-delaunay}
  \begin{tabular}{l*{9}{l}}
    \toprule
scheme &     $h^p$ & $\lVert p_h - p\rVert_{L^2}$ &rate & $h^{\boldsymbol{v}}$ & $\lVert \boldsymbol{v}_h - \boldsymbol{v}\rVert_{L^2}$ &rate & $\lVert \boldsymbol{v}_h - \boldsymbol{v}\rVert_{H^1}$ &rate & it \\
    \midrule
    \multirow{6}{*}{\bubbleboxovtable{2}} & 1.0e-01 & 6.51e-01 & - & 6.2e-02 & 4.73e-02 & - & 1.74e+00 & - & 24 \\
     & 5.7e-02 & 3.05e-01 & 1.34 & 3.4e-02 & 1.28e-02 & 2.21 & 9.49e-01 & 1.03 & 26 \\
     & 3.1e-02 & 1.09e-01 & 1.66 & 1.8e-02 & 3.62e-03 & 1.98 & 4.82e-01 & 1.06 & 26 \\
     & 1.6e-02 & 4.04e-02 & 1.51 & 9.3e-03 & 9.37e-04 & 2.04 & 2.48e-01 & 1.00 & 26 \\
     & 8.1e-03 & 1.51e-02 & 1.44 & 4.7e-03 & 2.42e-04 & 1.96 & 1.24e-01 & 1.01 & 26 \\
     & 4.1e-03 & 5.48e-03 & 1.48 & 2.4e-03 & 6.16e-05 & 2.00 & 6.25e-02 & 1.00 & 27 \\
    \hline
    \multirow{6}{*}{\bubbleboxhytable{2}} & 1.0e-01 & 8.25e-01 & - & 6.2e-02 & 4.74e-02 & - & 1.76e+00 & - & 26 \\
     & 5.7e-02 & 3.95e-01 & 1.31 & 3.4e-02 & 1.31e-02 & 2.18 & 9.60e-01 & 1.03 & 28 \\
     & 3.1e-02 & 1.41e-01 & 1.65 & 1.8e-02 & 3.61e-03 & 2.01 & 4.88e-01 & 1.06 & 28 \\
     & 1.6e-02 & 5.22e-02 & 1.52 & 9.3e-03 & 9.40e-04 & 2.03 & 2.51e-01 & 1.00 & 29 \\
     & 8.1e-03 & 1.96e-02 & 1.43 & 4.7e-03 & 2.42e-04 & 1.97 & 1.25e-01 & 1.01 & 29 \\
     & 4.1e-03 & 7.08e-03 & 1.49 & 2.4e-03 & 6.17e-05 & 1.99 & 6.32e-02 & 1.00 & 29 \\
    \hline
    \multirow{6}{*}{\bubbleboxnovtable{2}} & 1.0e-01 & 6.00e-01 & - & 6.2e-02 & 4.83e-02 & - & 1.73e+00 & - & 24 \\
     & 5.7e-02 & 2.97e-01 & 1.25 & 3.4e-02 & 1.36e-02 & 2.15 & 9.48e-01 & 1.02 & 26 \\
     & 3.1e-02 & 1.07e-01 & 1.63 & 1.8e-02 & 3.45e-03 & 2.14 & 4.81e-01 & 1.06 & 26 \\
     & 1.6e-02 & 4.02e-02 & 1.50 & 9.3e-03 & 8.90e-04 & 2.05 & 2.48e-01 & 1.00 & 26 \\
     & 8.1e-03 & 1.50e-02 & 1.44 & 4.7e-03 & 2.21e-04 & 2.02 & 1.24e-01 & 1.01 & 26 \\
     & 4.1e-03 & 5.46e-03 & 1.48 & 2.4e-03 & 5.64e-05 & 1.99 & 6.25e-02 & 1.00 & 27 \\
    \hline
    \multirow{6}{*}{\bubbleboxfemtable{2}} & 1.0e-01 & 1.11e+00 & - & 6.2e-02 & 9.81e-02 & - & 1.82e+00 & - & 26 \\
     & 5.7e-02 & 5.92e-01 & 1.11 & 3.4e-02 & 2.70e-02 & 2.19 & 9.82e-01 & 1.05 & 28 \\
     & 3.1e-02 & 2.19e-01 & 1.59 & 1.8e-02 & 6.37e-03 & 2.26 & 4.92e-01 & 1.08 & 28 \\
     & 1.6e-02 & 8.07e-02 & 1.53 & 9.3e-03 & 1.56e-03 & 2.12 & 2.52e-01 & 1.01 & 28 \\
     & 8.1e-03 & 3.06e-02 & 1.42 & 4.7e-03 & 3.71e-04 & 2.08 & 1.25e-01 & 1.01 & 28 \\
     & 4.1e-03 & 1.10e-02 & 1.50 & 2.4e-03 & 9.34e-05 & 2.01 & 6.33e-02 & 1.00 & 28 \\
    \hline
  \bottomrule
  \end{tabular}
  \end{small}
\end{table}

\begin{table}[!hbtp]
\begin{small}
  \centering
  \caption{\textbf{Convergence study}. Bercovier–Engelman test case (2D) on randomly distorted grid.}
  \label{tab:engelman-distorted}
  \begin{tabular}{l*{9}{l}}
    \toprule
scheme &     $h^p$ & $\lVert p_h - p\rVert_{L^2}$ &rate & $h^{\boldsymbol{v}}$ & $\lVert \boldsymbol{v}_h - \boldsymbol{v}\rVert_{L^2}$ &rate & $\lVert \boldsymbol{v}_h - \boldsymbol{v}\rVert_{H^1}$ &rate & it \\
    \midrule
    \multirow{6}{*}{\bubbleboxovtable{2}} & 9.1e-02 & 9.89e-01 & - & 5.6e-02 & 6.62e-02 & - & 1.98e+00 & - & 28 \\
     & 4.8e-02 & 3.94e-01 & 1.42 & 2.8e-02 & 1.97e-02 & 1.79 & 1.02e+00 & 0.98 & 30 \\
     & 2.4e-02 & 1.79e-01 & 1.18 & 1.4e-02 & 5.15e-03 & 1.96 & 5.13e-01 & 1.00 & 29 \\
     & 1.2e-02 & 8.08e-02 & 1.17 & 7.2e-03 & 1.30e-03 & 2.00 & 2.57e-01 & 1.00 & 29 \\
     & 6.2e-03 & 3.83e-02 & 1.09 & 3.6e-03 & 3.27e-04 & 1.99 & 1.29e-01 & 1.00 & 30 \\
     & 3.1e-03 & 1.85e-02 & 1.05 & 1.8e-03 & 8.07e-05 & 2.02 & 6.43e-02 & 1.00 & 30 \\
    \hline
    \multirow{6}{*}{\bubbleboxhytable{2}} & 9.1e-02 & 1.18e+00 & - & 5.6e-02 & 7.05e-02 & - & 1.99e+00 & - & 31 \\
     & 4.8e-02 & 4.67e-01 & 1.43 & 2.8e-02 & 2.08e-02 & 1.81 & 1.02e+00 & 0.98 & 34 \\
     & 2.4e-02 & 2.17e-01 & 1.14 & 1.4e-02 & 5.39e-03 & 1.97 & 5.16e-01 & 1.00 & 33 \\
     & 1.2e-02 & 9.70e-02 & 1.18 & 7.2e-03 & 1.35e-03 & 2.01 & 2.59e-01 & 1.00 & 33 \\
     & 6.2e-03 & 4.55e-02 & 1.10 & 3.6e-03 & 3.40e-04 & 1.99 & 1.29e-01 & 1.00 & 33 \\
     & 3.1e-03 & 2.20e-02 & 1.06 & 1.8e-03 & 8.41e-05 & 2.02 & 6.46e-02 & 1.00 & 33 \\
    \hline
    \multirow{6}{*}{\bubbleboxnovtable{2}} & 9.1e-02 & 9.47e-01 & - & 5.6e-02 & 8.23e-02 & - & 1.98e+00 & - & 28 \\
     & 4.8e-02 & 3.91e-01 & 1.37 & 2.8e-02 & 2.40e-02 & 1.82 & 1.02e+00 & 0.98 & 30 \\
     & 2.4e-02 & 1.79e-01 & 1.17 & 1.4e-02 & 6.20e-03 & 1.98 & 5.13e-01 & 1.00 & 29 \\
     & 1.2e-02 & 8.09e-02 & 1.17 & 7.2e-03 & 1.56e-03 & 2.00 & 2.57e-01 & 1.00 & 29 \\
     & 6.2e-03 & 3.84e-02 & 1.08 & 3.6e-03 & 3.93e-04 & 2.00 & 1.29e-01 & 1.00 & 29 \\
     & 3.1e-03 & 1.86e-02 & 1.05 & 1.8e-03 & 9.72e-05 & 2.02 & 6.43e-02 & 1.00 & 30 \\
    \hline
    \multirow{6}{*}{\bubbleboxfemtable{2}} & 9.1e-02 & 1.55e+00 & - & 5.6e-02 & 1.31e-01 & - & 2.09e+00 & - & 32 \\
     & 4.8e-02 & 6.43e-01 & 1.36 & 2.8e-02 & 3.57e-02 & 1.92 & 1.05e+00 & 1.02 & 35 \\
     & 2.4e-02 & 3.18e-01 & 1.05 & 1.4e-02 & 9.11e-03 & 2.00 & 5.23e-01 & 1.01 & 34 \\
     & 1.2e-02 & 1.41e-01 & 1.19 & 7.2e-03 & 2.28e-03 & 2.01 & 2.61e-01 & 1.01 & 34 \\
     & 6.2e-03 & 6.55e-02 & 1.12 & 3.6e-03 & 5.69e-04 & 2.01 & 1.30e-01 & 1.00 & 34 \\
     & 3.1e-03 & 3.16e-02 & 1.06 & 1.8e-03 & 1.42e-04 & 2.01 & 6.50e-02 & 1.00 & 34 \\
    \hline
  \bottomrule
  \end{tabular}
  \end{small}
\end{table}
The numerical results are shown in \cref{tab:engelman-delaunay,tab:engelman-distorted}. As in the previous test case, 2nd order $L^2$ and first-order $H^1$ convergence rates are observed for the velocity on both grids and for all schemes. Super-convergence for the pressure is again only observed on the Delaunay grid. 
The errors of all schemes are again in the same order of magnitude and $H^1$ velocity errors are very similar. On Delaunay grids, the errors of the non-overlapping and overlapping schemes are smaller compared to those of the other schemes. On randomly distorted grids, the $L^2$ velocity errors of the non-overlapping scheme are larger than those of the hybrid and overlapping schemes. The result with the classical finite elements (MINI) results in the largest errors. Also in this test case, the number of linear solver iterations indicates the stability of the schemes. 

\subsubsection{Taylor–Green}
The periodic initial solution of the classical 3D Taylor-Green vortex benchmark is given in \cite{Boyer2017} for $\Omega = (0,1)^3$, with viscosity $\mu = 1$, $\boldsymbol{f} = (36\pi^2\cos(2\pi x)\sin(2\pi y)\sin(2\pi z),0,0)$, $\vel = (v_x, v_y, v_z)^T$ and $p$ as:
\begin{align}
    v_x(x,y,z) &= -2\cos(2\pi x)\sin(2\pi y)\sin(2\pi z), \\
    v_y(x,y,z) &= \sin(2\pi x)\cos(2\pi y)\sin(2\pi z), \\
    v_z(x,y,z) &= \sin(2\pi x)\sin(2\pi y)\cos(2\pi z), \\
    p(x,y,z) &= -6\pi \sin(2\pi x)\sin(2\pi y)\sin(2\pi z).
\end{align}
Mixed boundary conditions matching the analytical solution are set for the momentum balance, i.e.\ Neumann conditions on the right, rear, and top boundaries and Dirichlet conditions on the remaining parts of the domain boundaries.

\begin{table}[!hbtp]
\begin{small}
  \centering
  \caption{\textbf{Convergence study}. Taylor–Green test case (3D) on a Delaunay grid.}
  \label{tab:greendelaunay}
  \begin{tabular}{l*{9}{l}}
    \toprule
scheme &     $h^p$ & $\lVert p_h - p\rVert_{L^2}$ &rate & $h^{\vel}$ & $\lVert \vel_h - \vel\rVert_{L^2}$ &rate & $\lVert \vel_h - \vel\rVert_{H^1}$ &rate & it \\
    \midrule
    \multirow{5}{*}{\bubbleboxovtable{3}} & 1.6e-01 & 7.76e+00 & - & 1.0e-01 & 4.61e-01 & - & 7.40e+00 & - & 50 \\
     & 9.6e-02 & 3.46e+00 & 1.53 & 5.6e-02 & 1.18e-01 & 2.28 & 3.94e+00 & 1.06 & 57 \\
     & 7.6e-02 & 2.18e+00 & 1.97 & 4.3e-02 & 6.09e-02 & 2.54 & 2.94e+00 & 1.13 & 60 \\
     & 6.0e-02 & 1.45e+00 & 1.72 & 3.3e-02 & 3.39e-02 & 2.26 & 2.23e+00 & 1.07 & 63 \\
     & 4.2e-02 & 8.24e-01 & 1.58 & 2.3e-02 & 1.46e-02 & 2.21 & 1.49e+00 & 1.06 & 64 \\
    \hline
    \multirow{5}{*}{\bubbleboxhytable{3}} & 1.6e-01 & 8.48e+00 & - & 1.0e-01 & 4.68e-01 & - & 7.39e+00 & - & 54 \\
     & 9.6e-02 & 3.83e+00 & 1.50 & 5.6e-02 & 1.20e-01 & 2.28 & 3.95e+00 & 1.05 & 60 \\
     & 7.6e-02 & 2.42e+00 & 1.97 & 4.3e-02 & 6.19e-02 & 2.55 & 2.94e+00 & 1.13 & 63 \\
     & 6.0e-02 & 1.60e+00 & 1.73 & 3.3e-02 & 3.45e-02 & 2.26 & 2.23e+00 & 1.07 & 65 \\
     & 4.2e-02 & 9.14e-01 & 1.57 & 2.3e-02 & 1.48e-02 & 2.22 & 1.49e+00 & 1.06 & 66 \\
    \hline
    \multirow{5}{*}{\bubbleboxfemtable{3}} & 1.6e-01 & 1.15e+01 & - & 1.0e-01 & 4.57e-01 & - & 6.77e+00 & - & 65 \\
     & 9.6e-02 & 6.21e+00 & 1.16 & 5.6e-02 & 1.81e-01 & 1.55 & 4.09e+00 & 0.85 & 72 \\
     & 7.6e-02 & 4.05e+00 & 1.84 & 4.3e-02 & 1.03e-01 & 2.18 & 3.03e+00 & 1.14 & 74 \\
     & 6.0e-02 & 2.66e+00 & 1.76 & 3.3e-02 & 5.93e-02 & 2.11 & 2.29e+00 & 1.08 & 78 \\
     & 4.2e-02 & 1.53e+00 & 1.55 & 2.3e-02 & 2.67e-02 & 2.10 & 1.52e+00 & 1.08 & 78 \\    
    
    \hline
  \bottomrule
  \end{tabular}
  \end{small}
\end{table}

\begin{table}[!hbtp]
\begin{small}
  \centering
  \caption{\textbf{Convergence study}. Taylor–Green test case (3D) on randomly distorted grid.}
  \label{tab:greendistorted}
  \begin{tabular}{l*{9}{l}}
    \toprule
scheme &     $h^p$ & $\lVert p_h - p\rVert_{L^2}$ &rate & $h^{\vel}$ & $\lVert \vel_h - \vel\rVert_{L^2}$ &rate & $\lVert \vel_h - \vel\rVert_{H^1}$ &rate & it \\
    \midrule
    \multirow{5}{*}{\bubbleboxovtable{3}} & 1.7e-01 & 6.18e+00 & - & 1.0e-01 & 4.34e-01 & - & 7.21e+00 & - & 41 \\
     & 9.1e-02 & 1.95e+00 & 1.90 & 5.1e-02 & 1.36e-01 & 1.72 & 3.88e+00 & 0.91 & 44 \\
     & 7.1e-02 & 1.44e+00 & 1.25 & 4.0e-02 & 8.13e-02 & 1.99 & 3.04e+00 & 0.94 & 45 \\
     & 5.6e-02 & 9.93e-01 & 1.49 & 3.0e-02 & 5.06e-02 & 1.78 & 2.35e+00 & 0.97 & 47 \\
     & 3.8e-02 & 6.33e-01 & 1.22 & 2.1e-02 & 2.45e-02 & 1.90 & 1.62e+00 & 0.98 & 49 \\
    \hline
    \multirow{5}{*}{\bubbleboxhytable{3}} & 1.7e-01 & 6.70e+00 & - & 1.0e-01 & 4.36e-01 & - & 7.18e+00 & - & 43 \\
     & 9.1e-02 & 2.12e+00 & 1.90 & 5.1e-02 & 1.38e-01 & 1.71 & 3.89e+00 & 0.91 & 46 \\
     & 7.1e-02 & 1.58e+00 & 1.21 & 4.0e-02 & 8.23e-02 & 1.99 & 3.05e+00 & 0.94 & 48 \\
     & 5.6e-02 & 1.09e+00 & 1.47 & 3.0e-02 & 5.12e-02 & 1.78 & 2.35e+00 & 0.97 & 50 \\
     & 3.8e-02 & 7.01e-01 & 1.21 & 2.1e-02 & 2.48e-02 & 1.90 & 1.62e+00 & 0.98 & 52 \\
    \hline
    \multirow{5}{*}{\bubbleboxfemtable{3}} & 1.7e-01 & 7.15e+00 & - & 1.0e-01 & 4.95e-01 & - & 6.81e+00 & - & 46 \\
     & 9.1e-02 & 3.05e+00 & 1.41 & 5.1e-02 & 1.93e-01 & 1.40 & 3.93e+00 & 0.81 & 53 \\
     & 7.1e-02 & 2.34e+00 & 1.10 & 4.0e-02 & 1.23e-01 & 1.72 & 3.09e+00 & 0.93 & 56 \\
     & 5.6e-02 & 1.67e+00 & 1.35 & 3.0e-02 & 7.75e-02 & 1.75 & 2.39e+00 & 0.97 & 58 \\
     & 3.8e-02 & 1.13e+00 & 1.06 & 2.1e-02 & 3.80e-02 & 1.86 & 1.64e+00 & 0.99 & 62 \\
    \hline
  \bottomrule
  \end{tabular}
  \end{small}
\end{table}

For this test case, the non-overlapping scheme is omitted since its extension to 3D is not straightforward.
The numerical results are shown in \cref{tab:greendelaunay,tab:greendistorted}.
The behavior of the schemes is very similar to the previous test cases and therefore only briefly discussed. The smallest $L^2$ errors are obtained for the overlapping scheme and the largest errors for the FEM. As before, super-convergence is only observed on Delaunay grids. 
The number of linear solver iterations increases only slightly and for both CVFE schemes and the FEM.
As for the 2D test cases, the FEM solver shows a slightly higher number of linear solver iterations. Quantitatively similar results (also showing these two observations) on the linear solver iterations have been reported by \cite{Boon2022} for Darcy-Stokes problems using both finite-element and finite-volume schemes.

The $L^2$ pressure errors are quite large for all schemes, yielding a relatively poor pressure approximation. This effect is known for the MINI element, \cite{Soulaimani1987,Chamberland2010}. It does not influence the convergence rate but the magnitude of errors. Thus, this is not unique to the presented CVFE schemes but a property inherited from the MINI element. Therefore, we do not discuss this in more detail but refer the reader to \cite{Sani1981,Soulaimani1987} for further details and a more general discussion.

\subsubsection{Poiseuille pipe flow}
In this test case, pressure-driven viscous flow ($\mu = \SI{1}{\milli\pascal\s}$) with mean velocity $\bar{v} = \SI{0.01}{\m\per\s}$ through a three-dimensional cylindrical pipe with radius $R=\SI{0.1}{\m}$ and length $L=\SI{1}{\m}$ is considered. The analytical solution can be derived by transforming into cylindrical coordinates, yielding the classical Poiseuille flow solution, $p(z) = (L-z) \bar{v}8\mu / R^2$, $v(z) = 2\bar{v}(1 - (r/R)^2)$, $r \in [0,R]$, $z \in [0,L]$. In this test case, consecutive finer meshes are created by running the meshing algorithm with half the target element size. The cylindrical boundary is better approximated in each step.

At the inflow and outflow boundaries, Neumann conditions are set. At the remaining cylinder surface 
$\vel = 0$ is set, see \cref{fig:pipe3d}. 
\begin{figure}[!htb]
    \centering
    \includegraphics[width=0.6\textwidth]{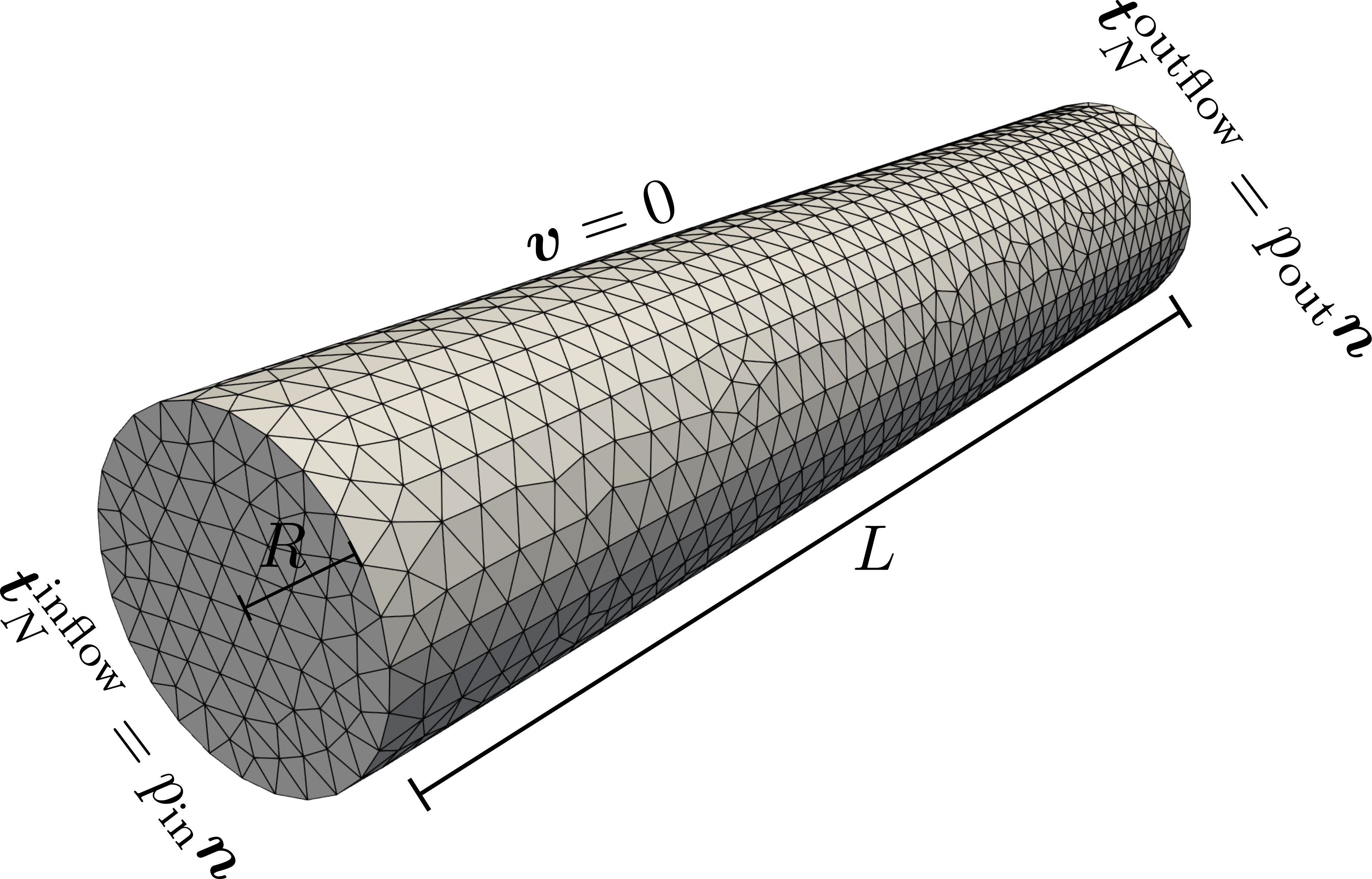}
    \caption{Boundary conditions and grid (coarse refinement level) used for the Poiseuille pipe flow test case; $R=\SI{0.1}{\m}$, $L=\SI{1}{\m}$, $\mu = \SI{1}{\milli\pascal\s}$, $p_\text{in} = \SI{0.08}{\pascal}$, $p_\text{out} = \SI{0}{\pascal}$, $\n$ denotes the outward-pointing unit normal on the cylinder surface and $\boldsymbol{t}$ the normal traction.}
    \label{fig:pipe3d}
\end{figure}

\begin{table}[!hbtp]
\begin{small}
  \centering
  \caption{\textbf{Convergence study}. Three-dimensional pipe flow using a with $\mu = \SI{1}{\milli\pascal\s}$.}
  \label{tab:pipe3d}
  \begin{tabular}{l*{9}{l}}
    \toprule
scheme &     $h^p$ & $\lVert p_h - p\rVert_{L^2}$ &rate & $h^{\vel}$ & $\lVert \vel_h - \vel\rVert_{L^2}$ &rate & $\lVert \vel_h - \vel\rVert_{H^1}$ &rate & it \\
    \midrule
    \multirow{6}{*}{\bubbleboxovtable{3}} & 4.1e-02 & 5.93e-05 & - & 2.5e-02 & 1.72e-03 & - & 7.70e-02 & - & 45 \\
     & 3.1e-02 & 3.44e-05 & 2.11 & 1.9e-02 & 9.48e-04 & 2.02 & 5.87e-02 & 0.92 & 44 \\
     & 2.3e-02 & 2.17e-05 & 1.57 & 1.3e-02 & 4.78e-04 & 2.10 & 4.12e-02 & 1.08 & 43 \\
     & 1.7e-02 & 1.35e-05 & 1.48 & 9.5e-03 & 2.31e-04 & 2.07 & 2.85e-02 & 1.05 & 43 \\
     & 1.2e-02 & 9.05e-06 & 1.27 & 6.8e-03 & 1.17e-04 & 2.02 & 2.02e-02 & 1.02 & 44 \\
     & 8.9e-03 & 5.93e-06 & 1.29 & 4.8e-03 & 5.80e-05 & 2.04 & 1.41e-02 & 1.03 & 45 \\
    \hline
    \multirow{6}{*}{\bubbleboxhytable{3}} & 4.1e-02 & 6.54e-05 & - & 2.5e-02 & 1.71e-03 & - & 7.72e-02 & - & 47 \\
     & 3.1e-02 & 3.80e-05 & 2.11 & 1.9e-02 & 9.44e-04 & 2.03 & 5.88e-02 & 0.92 & 45 \\
     & 2.3e-02 & 2.40e-05 & 1.57 & 1.3e-02 & 4.75e-04 & 2.11 & 4.13e-02 & 1.08 & 44 \\
     & 1.7e-02 & 1.49e-05 & 1.49 & 9.5e-03 & 2.29e-04 & 2.08 & 2.85e-02 & 1.05 & 45 \\
     & 1.2e-02 & 9.95e-06 & 1.27 & 6.8e-03 & 1.16e-04 & 2.02 & 2.02e-02 & 1.02 & 45 \\
     & 8.9e-03 & 6.52e-06 & 1.29 & 4.8e-03 & 5.75e-05 & 2.04 & 1.42e-02 & 1.03 & 46 \\
    \hline
    \multirow{6}{*}{\bubbleboxfemtable{3}} & 4.1e-02 & 1.21e-04 & - & 2.5e-02 & 1.62e-03 & - & 8.00e-02 & - & 50 \\
     & 3.1e-02 & 6.74e-05 & 2.27 & 1.9e-02 & 8.77e-04 & 2.08 & 6.01e-02 & 0.97 & 51 \\
     & 2.3e-02 & 4.16e-05 & 1.64 & 1.3e-02 & 4.34e-04 & 2.16 & 4.21e-02 & 1.09 & 49 \\
     & 1.7e-02 & 2.51e-05 & 1.57 & 9.5e-03 & 2.08e-04 & 2.09 & 2.90e-02 & 1.06 & 49 \\
     & 1.2e-02 & 1.67e-05 & 1.29 & 6.8e-03 & 1.05e-04 & 2.04 & 2.05e-02 & 1.03 & 50 \\
     & 8.9e-03 & 1.08e-05 & 1.33 & 4.8e-03 & 5.17e-05 & 2.05 & 1.44e-02 & 1.04 & 51 \\ 
    
    \hline
  \bottomrule
  \end{tabular}
  \end{small}
\end{table}

The results for the three-dimensional pipe flow test are shown in \cref{tab:pipe3d}. As expected, 2nd-order $L^2$ and first-order $H^1$ convergence rates are observed for the velocity for all schemes. Super-convergence for pressure is no longer observed. This could be caused by changing boundary conditions for the mass balance equation due to the geometrical approximation of the real cylindrical domain, i.e.\ the domain boundary changes with grid refinement.

\subsubsection{Summary}
The results of the previous test cases are very similar to those that were conducted for the MINI element in previous works, see e.g.~\cite{Cioncolini2019}, i.e.\ 2nd order for $L^2$ velocity errors, 1st order for $H^1$ velocity errors, and super-convergence $\mathcal{O}(h^{3/2})$ for pressure on Delaunay meshes. For all considered test cases, the CVFE schemes yield smaller errors than the FE scheme. 
Furthermore, the number of linear solver iterations strongly indicates the stability of the presented schemes for the given test cases and the used grids. 

As mentioned before, it has been shown for the MINI element that  the linear part of the velocity yields better approximations such that the bubble part is only needed for stabilization. This has not been observed for the schemes presented in this work, which is why the results have not been shown. This is probably strongly related to the modified basis functions, as discussed in \cref{subsec:DiscSpaces}.

\subsection{Viscous flow in complex domains}
To demonstrate the robustness and conservation properties
of the proposed methods, we simulate two showcases in complex domains using the overlapping CVFE scheme $\bubbleboxname{d}{ov}$. The results of a simulation of Stokes flow around a turtle-shaped obstacle are shown in \cref{fig:flowobstacle}. The method appears stable even in the presence of sharp corner singularities and the locally refined mesh. 

The results of a simulation of Stokes flow through a blood vessel bifurcation are shown in \cref{fig:vessel}. As a sanity check, we demonstrate the grid convergence of the integral inflow rate (results shown in \cref{tab:vessel_refine_balance}) and computed the mass balance in four subdomains (results shown in \cref{tab:vessel_mass_balance}) demonstrating that the scheme is locally conservative up to numerical machine precision.

\begin{figure}[!htb]
    \centering
    \includegraphics[height=0.3\textwidth]{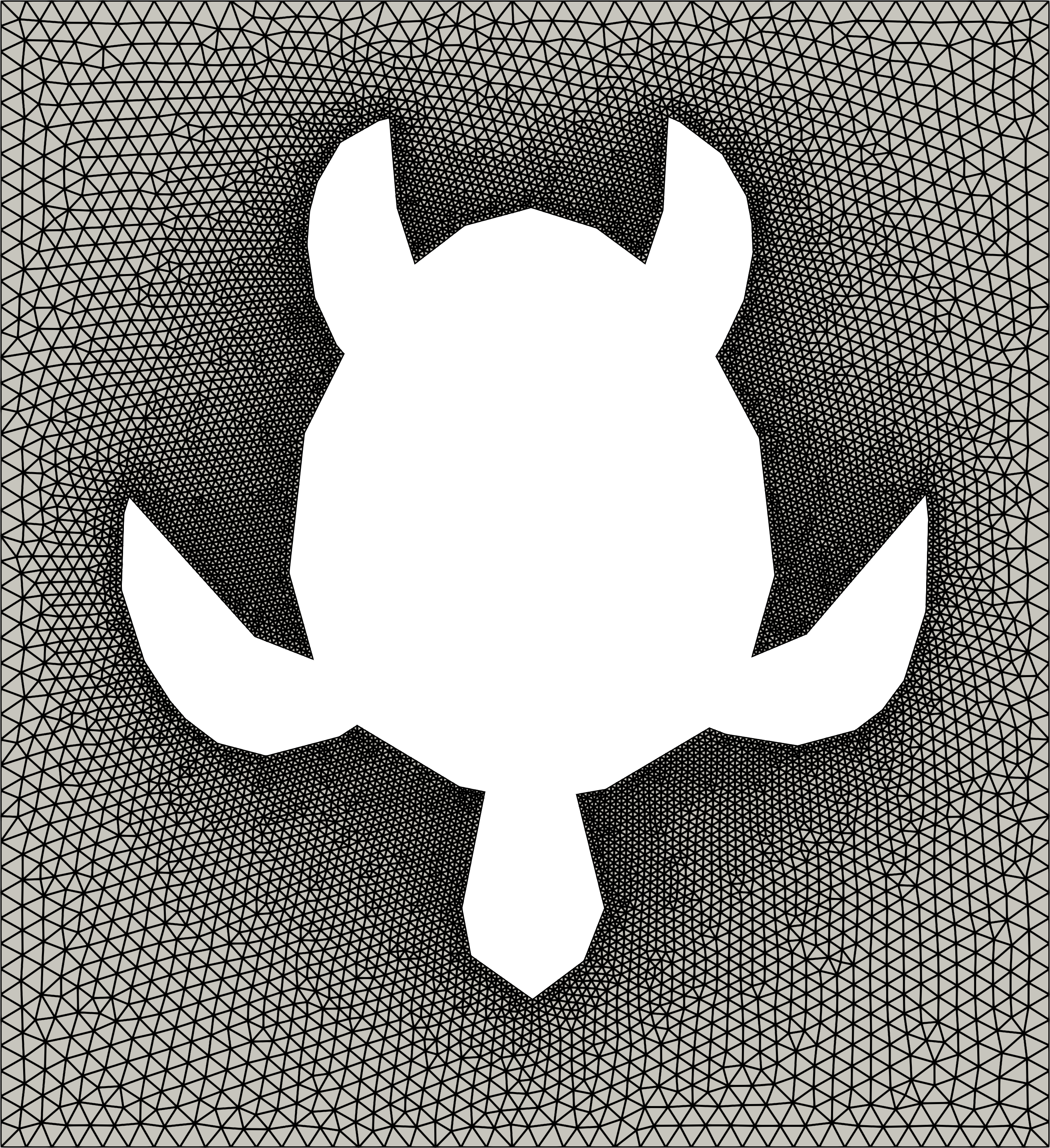} \hfill
    \includegraphics[height=0.3\textwidth]{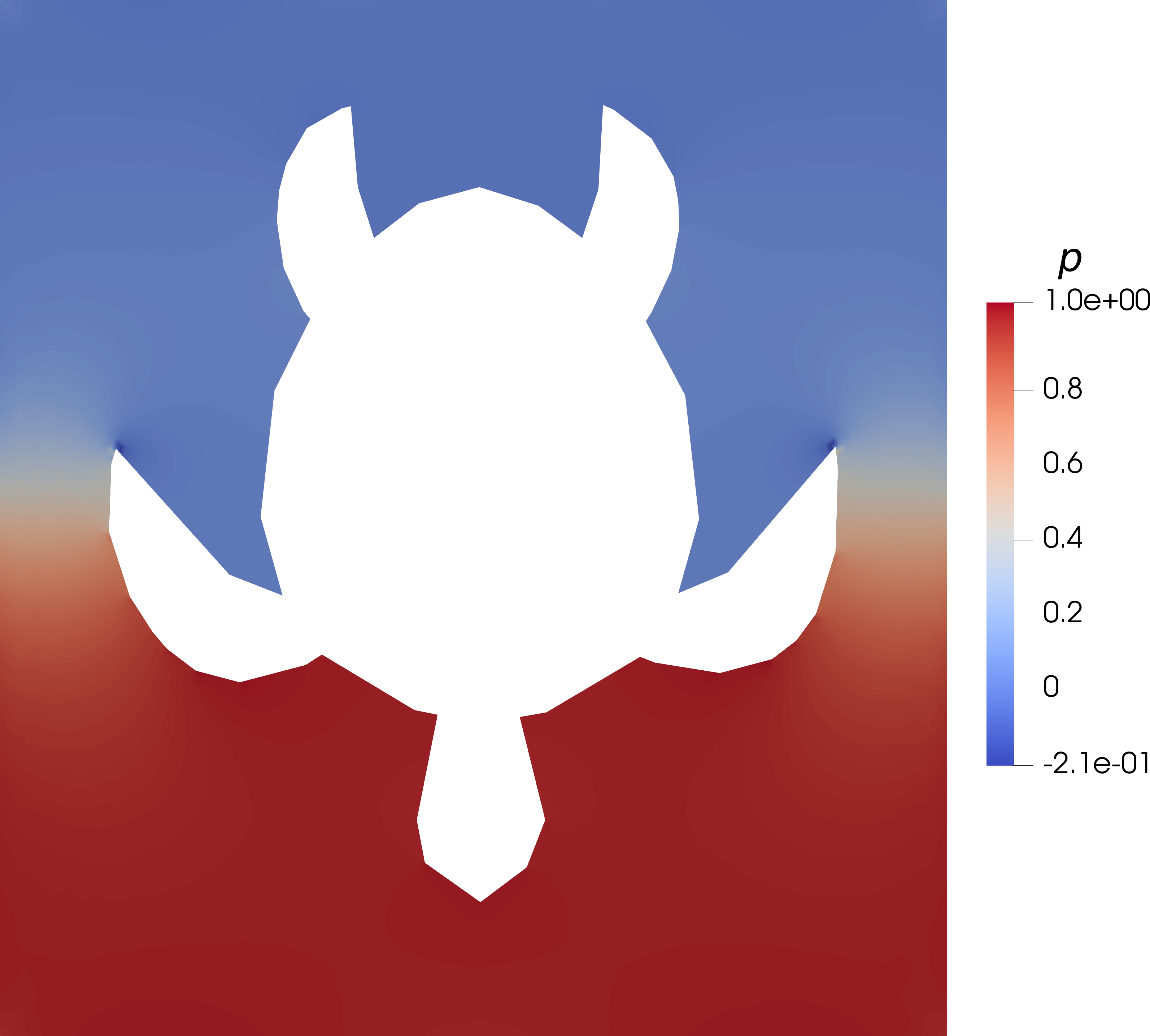} \hfill
    \includegraphics[height=0.3\textwidth]{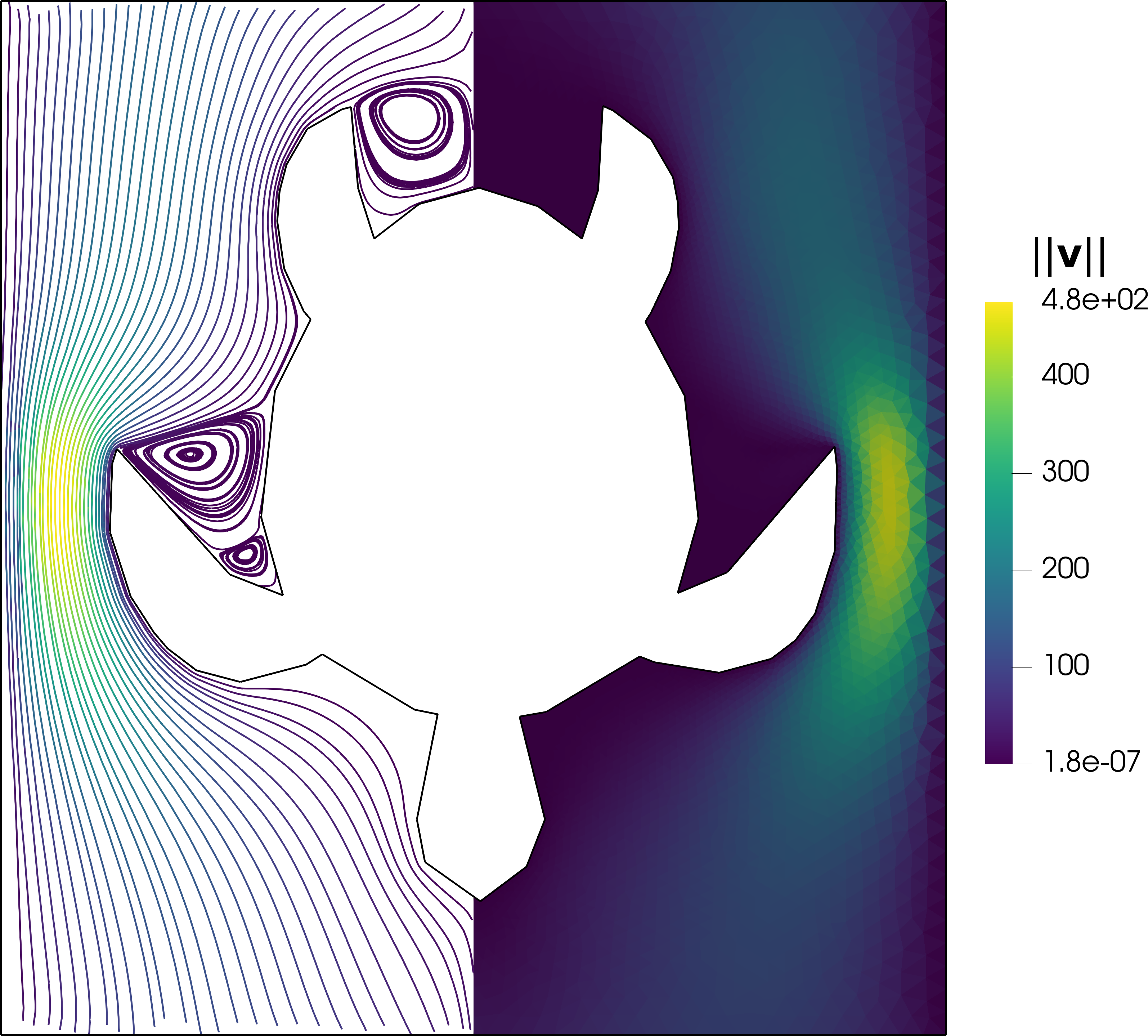}
    \caption{\textbf{Stokes flow around a turtle-shaped obstacle}. Grid used for the simulations (left), pressure (middle), and velocity solutions (right). The pressure approximation is physically consistent at the corner singularities. Moffatt eddies behind the extremities become obvious in the streamline pattern.}
    \label{fig:flowobstacle}
\end{figure}

\begin{figure}[!htb]
    \centering
    \includegraphics[width=\textwidth]{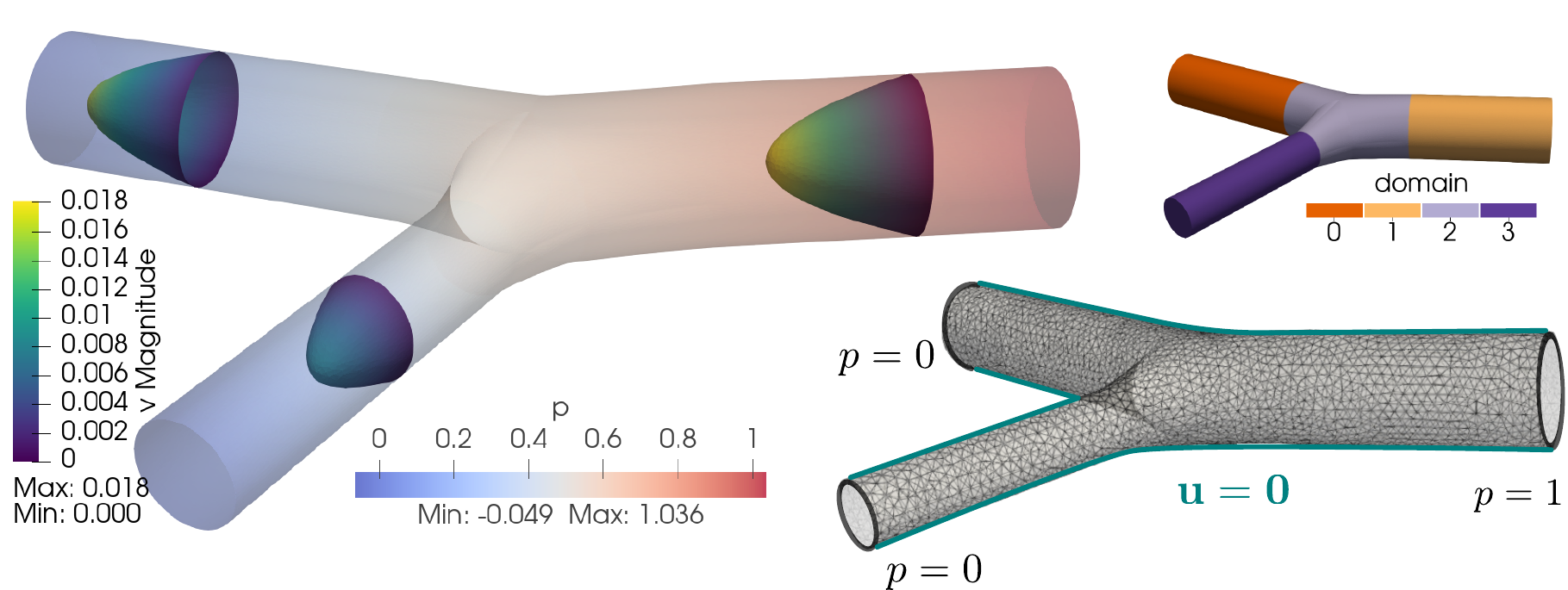}
    \caption{\textbf{Stokes flow through a vessel bifurcation}. The domain regions are used to demonstrate the mass conservation property of the discretization scheme. The pressure at inflow and outflow cross-sections is weakly enforced (natural boundary condition). Zero velocity is strongly enforced on all side walls. Velocity profiles are visualized on three slices by deforming the slice surface with the velocity vector field. The shown mesh corresponds to a problem with $120$k degrees of freedom.}
    \label{fig:vessel}
\end{figure}

\begin{table}[!htb]
    \centering
    \caption{\textbf{Grid convergence of inflow rate and global mass balance.} Integrated inflow rate and global mass balance with grid refinement for the vessel bifurcation case. The global mass balance is obtained by integrating $\rho \boldsymbol{v}_h\cdot \boldsymbol{n}$ over all boundaries ($\rho = \SI{1000}{\kg\per\cubic\m}$). It is normalized to the inflow rate. The column $\Delta Q$ shows the difference of the inflow rate value to the value on the next finer grid. The difference decreases with refinement indicating convergence and from the penultimate to the ultimate refinement $Q$ changes by less than \SI{1}{\percent}.}
    \label{tab:vessel_refine_balance}
    \begin{tabular}{r c c c} \toprule
         \#dofs & inflow rate $Q$ & $\Delta Q$ & global mass balance \\ \midrule
         17144 & \SI{2.3942e-5}{\kg\per\s} & \SI{30.6e-7}{\kg\per\s} & \SI{-16e-19}{\kg\per\s} \\
         114953 & \SI{2.7005e-5}{\kg\per\s} & \SI{8.5e-7}{\kg\per\s} & \SI{-16e-19}{\kg\per\s} \\
         756855 & \SI{2.7855e-5}{\kg\per\s} & \SI{2.4e-7}{\kg\per\s} & \SI{-4e-19}{\kg\per\s} \\
         5290409 & \SI{2.8090e-5}{\kg\per\s} & - & \SI{5e-19}{\kg\per\s} \\
    \bottomrule
    \end{tabular}
\end{table}

\begin{table}[!htb]
    \centering
    \caption{\textbf{Local mass balance in subregions.} The mass balance for the $4$ non-overlapping regions shown in \cref{fig:flowobstacle} (computed on a tetrahedral grid corresponding to $~5.3$M degrees of freedom) are exact up to machine precision since the flux approximation scheme is locally mass conservative and the regions are chosen as the union of all vertex-centered control volumes centered in the region.}
    \label{tab:vessel_mass_balance}
    \begin{tabular}{l c c c c} \toprule
         region & 0 & 1 & 2 & 3 \\ \midrule
         mass balance & \SI{-2e-19}{\kg\per\s} & \SI{8e-19}{\kg\per\s} & \SI{7e-20}{\kg\per\s} & \SI{-2e-19}{\kg\per\s} \\
    \bottomrule
    \end{tabular}
\end{table}




\section{Summary and conclusion}
Three types of control-volume finite-element schemes, namely non-overlapping, overlapping, and hybrid, have been presented and investigated on the basis of the inf-sup stable MINI finite element. All of the presented approaches are locally mass and momentum conservative for a given (sub-)set of control volumes. The overlapping and hybrid schemes have the advantage that both mass and momentum are conserved on identical control volumes. These control volumes form a partition of the domain.
The hybrid scheme generalizes to higher-order polynomial approximations of the solution while mass and momentum are conserved on the same control volumes as in the lowest-order case. As a caveat, due to the inherent relation to the MINI finite element, all schemes lack pressure robustness (a property of many common Stokes finite elements as discussed in detail in \citep{John2017}).

The discretizations are structure-preserving in the sense that they preserve the local conservation property strongly (conservation of mass and conservation of linear momentum) which is fundamental to the Stokes equations as a set of continuity equations. At the same time, all presented schemes appear to be stable in numerical experiments, including grid convergence tests and the observation of boundedness of preconditioned Krylov solver iterations (indicating bounded singular values of the preconditioned discrete Stokes operator).

The presented schemes use classical vertex-centered finite-volume schemes~\citep{Winslow1966,Bank1987Box} as a building block. In this way, they appear to be convenient for coupled problems such as the (Navier-)Stokes-Darcy problem~\citep{schneider2021coupling} for which the pressure variable can then be discretized uniformly in the entire domain. At the same time, local mass and momentum conservation across the coupling interface can be achieved. Such coupled systems will be the subject of upcoming work.

\section*{Acknowledgements}
The authors would like to thank Kent-André Mardal and Rainer Helmig for constructive discussions. TK acknowledges financial support from the European Union’s Horizon 2020 Research and Innovation programme under the Marie Skłodowska‐Curie Actions Grant agreement No 801133. MS would like to thank the German Research Foundation (DFG) for supporting this work by funding SFB 1313, Project Number
327154368, Research Project A02.

\appendix

\section{Analytical solution for 2D Donea-Huerta}
\label{sec:appendix:donea}
By method of manufactured solutions, the following analytical solution for $\Omega = \left(0, 1\right)^2$ solves the Stokes equations, \cref{eq:stokes}, with $\vel = (v_x, v_y)^T$,
\begin{align}
    v_x(x,y) &= h(x) h^\prime(y), \\
    v_y(x,y) &= -h(y) h^\prime(x) \\
    p(x,y) &= g(x),
\end{align}
where $g(u) := u(1-u)$; $h(u) := g^2(u) = u^2(1-u)^2$; $h^\prime(u) := \frac{\partial h}{\partial u} = 2u - 6u^2 + 4u^3$, for a given right-hand-side, $\boldsymbol{f} = (f_x, f_y)^T$,
\begin{align}
    f_x(x,y) &= -2\mu \partial_{xx}\vel_x - \mu\partial_{yy}\vel_x - \mu\partial_{xy}\vel_y + \partial_{x}p, \\
    f_y(x,y) &= -2\mu \partial_{yy}\vel_y - \mu\partial_{xx}\vel_y - \mu\partial_{xy}\vel_x + \partial_{y}p,
\end{align}
where with $h^{\prime\prime}(u) := \frac{\partial^2 h}{\partial u^2} = 2 - 12u + 12u^2$; $h^{\prime\prime\prime}(u) := \frac{\partial^3 h}{\partial u^3} = -12 + 24u$,
\begin{align*}
    \partial_{xx}\vel_x &= h^{\prime\prime}(x) h^{\prime}(y), & \partial_{yy}\vel_x &= h(x) h^{\prime\prime\prime}(y), & \partial_{xy}\vel_x &= h^{\prime}(x) h^{\prime\prime}(y), \\ 
    \partial_{yy}\vel_y &= -h^{\prime\prime}(y) h^{\prime}(x), & \partial_{xx}\vel_y &= -h(y) h^{\prime\prime\prime}(x), & \partial_{xy}\vel_y &= -h^{\prime}(y) h^{\prime\prime}(x) \\
    \partial_{x}p &= 1 - 2x, & \partial_{y}p &= 0. &&
\end{align*}
The solution corresponds to the one presented by \cite{Donea2003} trivially extended for arbitrary viscosity, $\mu$.

 \bibliographystyle{elsarticle-num} 
 \bibliography{2023_cvfe_mini}





\end{document}